\documentclass[11pt]{article}

\usepackage[a4paper,margin=1in]{geometry}
\usepackage[T1]{fontenc}
\usepackage[utf8]{inputenc}
\usepackage{lmodern}
\usepackage{amsmath,amssymb,amsthm,mathtools}
\usepackage{graphicx}
\usepackage{float}
\usepackage{microtype}
\usepackage{enumitem}
\usepackage{caption}
\usepackage{tikz}
\usepackage[hidelinks]{hyperref}
\usepackage{cite}
\usepackage{authblk}

\allowdisplaybreaks

\newtheorem{theorem}{Theorem}[section]
\newtheorem{lemma}[theorem]{Lemma}
\newtheorem{proposition}[theorem]{Proposition}
\newtheorem{corollary}[theorem]{Corollary}
\newtheorem{problem}[theorem]{Problem}
\theoremstyle{remark}
\newtheorem{remark}[theorem]{Remark}

\newcommand{\N}{\mathbb{N}}
\newcommand{\Reach}{\operatorname{Reach}}
\newcommand{\cR}{\mathcal{R}}

\numberwithin{equation}{section}

\title{The Exact End-Degree Threshold for\\
Finite-Width Directed Hexagonal Grids}

\author[1]{Dan Zhu}
\author[2]{Zhenhua Lyu\thanks{Corresponding author. Email: lyuzhh@outlook.com (Lyu)}}

\affil[1]{School of Mathematical Sciences, Shenzhen University,
Shenzhen 518060, China}

\affil[2]{School of Science, Shenyang Aerospace University,
Shenyang 110136, China}

\date{August 21, 2026}

\begin{document}
\maketitle
\footnotetext[2]{Email: zd2609477480@163.com (Zhu)}
\vspace{-2.5em}

\begin{abstract}
Grid theorems provide a fundamental link between the structure of ends and the existence of grid-like subgraphs in infinite graphs and digraphs. For every fixed width $n$, let $k(n)$ denote the least positive integer such that every digraph with an end of in-degree at least $k(n)$ contains a subdivision of the directed hexagonal grid of width $n$. Hamann and Heuer asked the exact vaule of $k(n)$. We solve this problem by proving that
$$
k(n)=\left\lfloor\frac{3n}{2}\right\rfloor-1
\qquad\text{for every }n\geq4,
$$
while $k(1)=1$, $k(2)=2$, and $k(3)=4$. For the upper bound, we establish a finite vacancy theorem for labelled token slides, convert it into a closed directed schedule on an auxiliary ray digraph, and lift the schedule through fresh clean linkages. For the matching lower bound, we use alternating orientations of products of a ray with a three-armed tree and prove a no-passing property for disjoint directed paths. 

We also determine the dual extremal parameter. Let $W(d)$ denote the largest width that is always forced by an end of in-degree $d$, with all branch rays remaining in that end, then
$$W(d)=\left\lfloor\frac{2d}{3}\right\rfloor+1
\qquad\text{for every }d\geq4,$$
with $W(1)=1$ and $W(2)=W(3)=2$. 
\end{abstract}

\section{Introduction}

We use standard terminology for graphs and digraphs~\cite{BangJensenGutin,Diestel}. A \emph{ray} is a one-way infinite path. In an undirected graph $G$, two rays are equivalent if, for every finite set $S\subseteq V(G)$, tails of both rays lie in the same component of $G-S$. An \emph{end} of $G$ is an equivalence class of rays under this relation. A subdivision of a graph or digraph is obtained by replacing each edge or arc by a path or dipath, respectively, so that the replacement paths have pairwise disjoint interiors and no internal vertex is a branch vertex.

Halin's grid theorem is one of the fundamental links between ends and grid structure in infinite graphs~\cite{Halin}. It states that an undirected graph containing infinitely many pairwise disjoint equivalent rays contains a subdivision of the hexagonal half-grid whose vertical rays lie in the same end. Kurkofka, Melcher and Pitz~\cite{KurkofkaMelcherPitz} later strengthened this result by showing that the vertical rays may be chosen from any prescribed infinite family of pairwise disjoint equivalent rays. These results concern ends of infinite degree and produce a grid of infinite width.

For ends of finite degrees, a natural problem is to determine the minimum degree of the end that guarantees a subdivision of a hexagonal grid of a prescribed finite width. In the undirected setting, Stein~\cite{Stein} obtained a sharp end-degree bound for finite-width $[k]\times\mathbb{N}$-grid minors. His extremal examples are products of a subdivided three-armed tree with a ray. The vertical tracks determine the degree of the end, while the three arms restrict the width of a grid that can occur. These examples give the factor $3/2$ that also appears in our lower bound below.

In a digraph, a ray is a one-way infinite directed path, while an anti-ray is a one-way infinite path directed towards its initial vertex. Zuther~\cite{ZutherThesis,ZutherEnds} introduced an end concept as follows. For rays or anti-rays $Q$ and $R$ in a digraph $D$, write $Q\leq R$ if there are infinitely many pairwise vertex-disjoint $Q$--$R$ dipaths, and write $Q\sim R$ if both $Q\leq R$ and $R\leq Q$. The equivalence classes under $\sim$ are called the ends of $D$. The \emph{in-degree} of an end containing rays is the maximum size of a family of pairwise vertex-disjoint rays in it.  Hamann and Heuer~\cite{HamannHeuerEndDegree} showed that this maximum is attained, including in the infinite case. For countable ends, they also characterized the combined in- and out-degrees in terms of end-exhausting sequences. They also proved that an end containing infinitely many pairwise disjoint rays contains a subdivision of the bidirected quarter-grid~\cite{HamannHeuerGrids}. Reich~\cite{Reich} independently proved a directed version of Halin's grid theorem by analysing an infinite auxiliary digraph encoding the connections among a prescribed family of pairwise disjoint equivalent rays. These results show that a directed end of infinite in-degree contains a standard infinite grid structure.

We next introduce the finite-width grid notations. For $n\ge1$, let
\[
L_i=x^i_0x^i_1x^i_2\cdots\qquad(1\le i\le n)
\]
be pairwise disjoint rays. The directed hexagonal grid of width $n$, denoted by $H_n$, is obtained by adding, for $1\le i<n$, the arcs
\[
\begin{array}{ll}
x^i_jx^{i+1}_j & \text{if $i$ is odd and $j\equiv1\pmod4$},\\
x^{i+1}_jx^i_j & \text{if $i$ is odd and $j\equiv3\pmod4$},\\
x^i_jx^{i+1}_j & \text{if $i$ is even and $j\equiv2\pmod4$},\\
x^{i+1}_jx^i_j & \text{if $i$ is even and $j\equiv0\pmod4$}.
\end{array}
\]
Deleting a common finite initial part from all rails, and then reindexing the remaining vertices, yields a tail isomorphic to $H_n$. Figure~\ref{fig:H4} illustrates a finite initial part of $H_4$. 
 
\begin{figure}[H]
\centering
\includegraphics[width=0.20\textwidth]{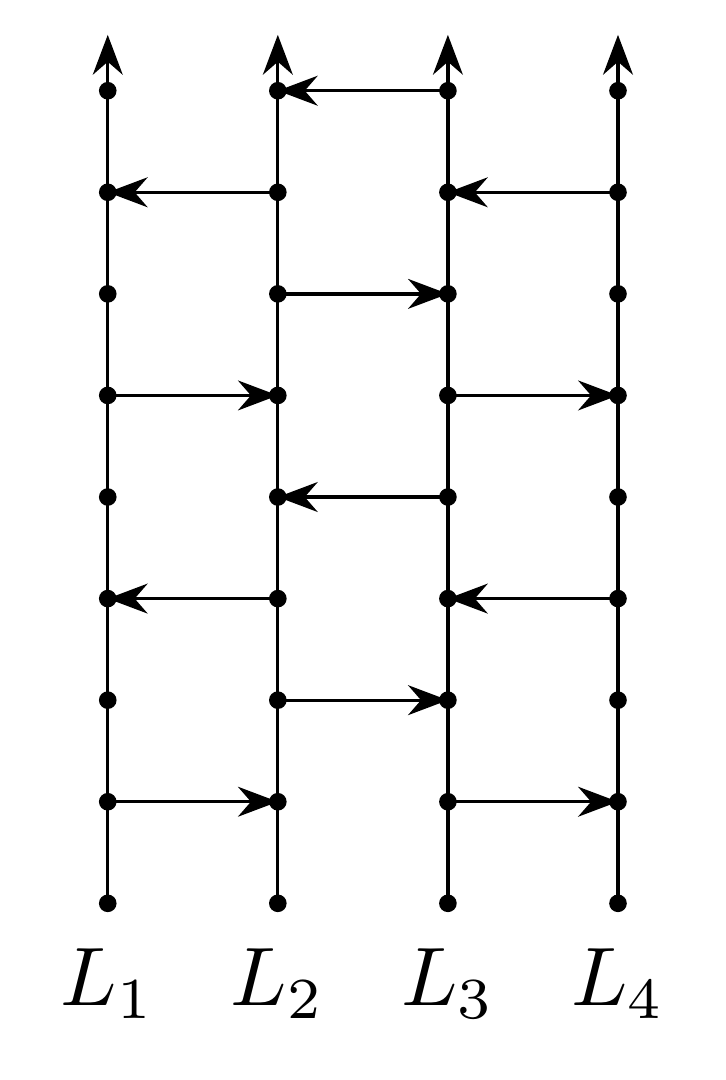}
\caption{A finite initial part of $H_4$.}
\label{fig:H4}
\end{figure}

 For every fixed width $n$, let $k(n)$ denote the least positive integer such that every digraph with an end of in-degree at least $k(n)$ contains a subdivision of the directed hexagonal grid of width $n$. Hamann and Heuer~\cite{HamannHeuerGrids} proved that $k(n)$ exists and provided upper and lower bounds on it. Their upper bound is obtained from a structural trichotomy for finite strong digraphs, while their lower bound is obtained from an orientation of Stein's three-armed product. They also posed the following open problem.

\begin{problem} ~\cite{HamannHeuerGrids} \label{problem:HH}
Let  $n$ be a positive integer. Determine the exact value of $k(n)$. 
\end{problem}

We give a complete solution to this problem as follows.

\begin{theorem}\label{thm:main}
Let $n\ge4$. We have
\[
k(n)=\left\lfloor\frac{3n}{2}\right\rfloor-1.
\]
In addition, $k(1)=1, k(2)=2, k(3)=4$.
\end{theorem}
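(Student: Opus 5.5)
The proof splits into an upper bound and a matching lower bound, following the strategy announced in the abstract. Put $d=\lfloor 3n/2\rfloor-1$ for $n\ge4$.

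\emph{Upper bound.} Let $D$ have an end $\omega$ of in-degree at least $d$. By Hamann--Heuer we fix $d$ pairwise disjoint rays $R_1,\dots,R_d$ in $\omega$. Since they are pairwise equivalent, for any tails and any finite set $S$ there are $R_i$--$R_j$ dipaths avoiding $S$. As in Reich's approach, we encode these connections in an auxiliary \emph{ray digraph} $\cR$ on $\{1,\dots,d\}$. For any tail, we put an arc $ij$ when there is an $R_i$--$R_j$ dipath that avoids all other rays and lies beyond it, after rerouting. Replacing rays by linkages, we may assume $\cR$ is strongly connected.

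The core task is to realise, inside $\cR$, a \emph{closed directed schedule}. This is a finite periodic sequence of moves that moves $n$ labelled tokens, each occupying a ray, so that the pattern of $H_n$ appears. Concretely, in each period consecutive labels $i,i+1$ must exchange information in the prescribed directions ($x^i\to x^{i+1}$ at phase $1$ or $2$ and back at phase $3$ or $0$), and at the end the tokens return to their original rays. The rays not carrying tokens are vacancies that tokens slide through. A \emph{finite vacancy theorem} will assert that with $d-n=\lfloor n/2\rfloor-1$ vacancies, every strongly connected digraph on $d$ vertices admits such a schedule. Here $\lfloor n/2\rfloor-1$ is exactly the number needed to handle a branching vertex of degree three, where about half of the tokens must temporarily park off a path.

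We then lift the schedule to $D$. Each token slide becomes a dipath between tails. We choose the dipaths successively further out, using \emph{fresh clean linkages} (disjoint from everything built so far, which is possible because only finitely much has been used). Concatenating infinitely many periods yields the $n$ rails and the rungs of a subdivided $H_n$.

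\emph{Lower bound.} Take $T$, a three-armed star with arms of lengths roughly $n/2$ and a total of $d-1=\lfloor 3n/2\rfloor-2$ vertices. Form $T\times$ ray and orient the vertical edges upward. Orient the horizontal edges alternately by level, so that the unique end has in-degree $d-1$. Suppose a subdivided $H_n$ exists. Its $n$ rails are disjoint rays in the end, so tails follow the $d-1$ vertical tracks. A \emph{no-passing} lemma says that disjoint directed paths in this product cannot overtake one another along an arm, because of the alternating orientation. Hence the rails' order along each arm is preserved. Because the rungs alternate between consecutive rails in both directions, the rails must form a path in $T$ that visits the tracks consecutively. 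A three-armed tree admits such a path of length at most the sum of two arms. This gives $n\le$ the bound and contradicts $d-1$ tracks. The small cases $n=1,2,3$ are handled directly: $n=3$ needs $4$ because two arms plus a centre with one parking spot is insufficient.

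I expect the main obstacle to be the finite vacancy theorem, specifically the sharp count of vacancies at trees with vertices of degree three, together with showing the schedule closes up with labels restored.
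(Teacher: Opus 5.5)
Your outline has the same architecture as the paper: auxiliary ray digraph, closed token schedule with vacancies, lifting through fresh clean linkages for the upper bound, and an alternately oriented three-armed product with a no-passing lemma for the lower bound. However, the decisive step of each half is missing.

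\textbf{Upper bound.} The finite vacancy theorem is not a detail to be filled in later; it is the whole content. The heuristic that about half the tokens must park off a path does not yield it. The paper needs two ingredients.

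First, an undirected statement on a spanning tree of $q$ carriers with $\lfloor (q-1)/3\rfloor$ holes. It is proved by induction, deleting three leaves at a time. The Hamilton path $Q_1\cdots Q_p$ of the inductive closed schedule becomes the spine of a caterpillar with the three leaves attached. The one-hole caterpillar puzzle is solved directly, and its meta-schedule is simulated on the tree by repeating the inductive schedule periodically. Thus three new vertices buy one hole and two labels.

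Second, a passage to directed schedules. Strong components of the directed placement graph coincide with components of the undirected one, by rotating along a directed cycle through each arc. Moreover, if $x$ sits on $u$, $y$ on $v$, and only $uv$ is an arc, the reverse event $y\Rightarrow x$ can still be witnessed, either by a handover on $v$ or by emptying $u$ and ticking a directed cycle through $uv$.

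This matters for your lifting. A slide or contact only yields a linkage in the direction of an arc of $A(\mathcal{R})$. Without handover events, whose vacated carrier segment becomes the reverse rung, you cannot obtain rungs in both directions between consecutive rails when only one of $ij,ji$ is an arc. Your count $\lfloor n/2\rfloor-1$ also gives no vacancy for $n=3$, so $k(3)\le4$ (four carriers, one hole) is not covered.

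\textbf{Lower bound.} The claim that tails follow the vertical tracks is false. With spare tracks, rails can drift horizontally and migrate between arms through $o$. No-passing only controls order relative to a rail $S$ while $S$ stays in one arm. So \emph{the rails must form a path in $T$} is exactly what needs proof.

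The paper argues as follows. At a late cut every arm carries a rail, since otherwise there are at most $1+a+b$ rails. Take an internal grid rail $R$ outermost on its arm $A$, and let $S$ be the next rail inward.
\begin{enumerate}
\item If $S$ stays in $A\cup\{o\}$, no-passing separates $R$ from its other grid neighbour.
\item If $S$ switches to an arm $B$ along $oB_1\cdots B_\rho$, then $R$ is by then alone on $A$, and $S$ blocks the vertical arcs on $o,B_1,\dots,B_{\rho-1}$. The next cut therefore carries at most $2+(h_B-\rho)+h_C\le a+b+1$ rails.
\end{enumerate}
Your sketch has nothing corresponding to the second case.

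\textbf{The case $k(3)\ge4$.} This cannot come from your family. Three genuine arms force in-degree at least $4$. The alternately oriented product of a three-vertex path with a ray has in-degree three but does contain $H_3$: the centre rail receives on odd layers and sends on even layers. The paper instead uses the separate digraph $\Gamma_3$. There, any three disjoint rays are eventually $R_1,R_2,R_3$, late rungs are single arcs, and these are oriented only cyclically $R_1\to R_2\to R_3\to R_1$.
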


We introduce the auxiliary ray digraph. Let $\cR$ be a finite family of pairwise disjoint rays. The \emph{auxiliary ray digraph} $A(\cR)$ has vertex set $\cR$. It contains the arc $RS$ exactly when there are infinitely many pairwise vertex-disjoint $R$--$S$ dipaths whose interiors avoid every ray of $\cR$. We call these dipaths \emph{clean $R$--$S$ linkages}. As observed in~\cite{HamannHeuerGrids}, if all rays in $\mathcal{R}$ belong to the same end, then $A(\mathcal{R})$ is strong.

For the upper bound, we choose a suitable finite family of pairwise disjoint rays in the end and form its auxiliary ray digraph. A finite labelled token-scheduling argument produces a closed directed schedule with the required ordered events. We then lift this schedule, using fresh clean linkages beyond successive carrier frontiers, to obtain a subdivision of $H_n$.

For the lower bound, we use directed products of a three-armed tree with a ray. Vertical cuts determine the in-degree of the unique end, while a no-passing property for disjoint directed paths excludes a subdivision of the required width. Suitable choices of the three arm lengths give the matching lower bound for every $n\geq 4$. The case $n=3$ is obtained from the separate three-ray construction $\Gamma_3$, and the cases $n=1,2$ follow directly.

Section 2 introduces the finite labelled-token framework and establishes the finite scheduling results needed later. Section 3 applies these results to directed graphs and proves the upper bound in Theorem~\ref{thm:main}. Section 4 constructs the extremal examples and proves the corresponding lower bound, completing the proof of Theorem~\ref{thm:main}. Section 5 reformulates the main result in terms of end in-degree and determines the largest directed hexagonal-grid width forced by a prescribed end in-degree.

\section{A finite vacancy theorem}

Let $G$ be a finite graph. A \emph{placement} of $r$ labelled tokens on $G$ assigns the labelled tokens injectively to vertices of $G$. The $h=|V(G)|-r$ unoccupied vertices are called \emph{holes}. A \emph{token slide} moves one labelled token along an edge into an adjacent hole. A finite sequence of slides is a \emph{closed schedule} if every labelled token returns to its initial vertex. Two labelled tokens are in \emph{contact} at a recorded placement when they occupy adjacent vertices. The \emph{contact graph} has the labels as vertices and the recorded contacts as edges.

\begin{lemma}\label{lem:occupation}
Let $G$ be a connected finite graph, and let $\pi$ be a placement on $G$ containing at least one labelled token. Any vertex of $G$ can be occupied at some placement along a finite sequence of token slides that starts and ends at $\pi$.
\end{lemma}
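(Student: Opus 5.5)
The plan is to use the reversibility of token slides: move a single token into the target vertex along a path of holes, then undo the moves in reverse order. Throughout I take ``occupied'' to mean occupied by some labelled token. Fix the target vertex $v$.

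If $v$ is occupied at $\pi$, the empty sequence of slides already does the job, since it starts and ends at $\pi$. This covers in particular the case $h=0$. So I may assume that $v$ is a hole of $\pi$.

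Since $\pi$ contains at least one labelled token and $G$ is connected, there is a shortest path in $G$ from $v$ to the set of occupied vertices. Call it $v=u_0u_1\cdots u_m$ with $m\ge1$. By minimality, $u_m$ carries a token $t$ and $u_0,\dots,u_{m-1}$ are all holes of $\pi$. I then slide $t$ successively from $u_m$ to $u_{m-1}$, then to $u_{m-2}$, and so on until it reaches $u_0=v$. Each slide is legal: when $t$ moves from $u_{i}$ to $u_{i-1}$, the vertex $u_{i-1}$ is still a hole, because only $t$ has moved and it has only visited $u_m,\dots,u_i$. After these $m$ slides, $v$ is occupied by $t$.

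To return to $\pi$, I perform the reverse slides $u_0\to u_1\to\cdots\to u_m$ for $t$. The general principle is that if a slide moves a token from $a$ to an adjacent hole $b$, then immediately afterwards $a$ is a hole. Hence the reverse move from $b$ to $a$ is a legal slide, and it restores the previous placement exactly. Applying this principle to the forward slides in reverse order brings back $\pi$, with every labelled token on its initial vertex. The concatenated sequence therefore starts and ends at $\pi$ and passes through a placement at which $v$ is occupied.

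No step here is a real obstacle. The only point needing care is to choose the path to the \emph{nearest} token, so that all intermediate vertices are holes and a single token can travel without disturbing any other. This is exactly where connectivity of $G$ and the hypothesis that at least one token exists are used.
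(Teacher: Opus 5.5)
Your proposal is correct and is essentially the paper's proof: take an occupied vertex nearest to $v$, slide its token along a shortest path of holes into $v$, then reverse the slides to return to $\pi$. Your explicit check that each reverse slide is legal is a small addition that the paper leaves implicit.
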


\begin{proof}
Let $v$ be any vertex of $G$. If $v$ is already occupied in $\pi$, there is nothing to prove. Suppose $v$ is a hole. Choose an occupied vertex $u$ of minimum distance from $v$, and let $v=v_0v_1\cdots v_k=u$ be a shortest $v$--$u$ path. By the choice of $u$, the vertices $v_0,\ldots,v_{k-1}$ are holes. Let $\tau$ be the labelled token occupying $u=v_k$. Slide $\tau$ successively as
\[
v_k\to v_{k-1},\quad
v_{k-1}\to v_{k-2},\quad
\ldots,\quad
v_1\to v_0.
\]
After these slides, $\tau$ occupies $v_0=v$. Reversing the sequence,
\[
v_0\to v_1,\quad
v_1\to v_2,\quad
\ldots,\quad
v_{k-1}\to v_k,
\]
returns $\tau$ to $u$.
\end{proof}

A tree is called a \emph{caterpillar} if its non-leaf vertices induce a path, possibly a trivial one or the empty path. Such a path is called a \emph{spine} of the caterpillar.

\begin{lemma}\label{lem:caterpillar}
Let $C$ be a caterpillar with $m$ edges. There is a placement of $m$ labelled tokens and exactly one hole on $C$, for which there exists a finite closed schedule whose contact graph contains a Hamilton path.
\end{lemma}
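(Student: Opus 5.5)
The plan is to build the schedule by moving the single hole along the spine, and to pause at each spine vertex to cycle its leaf tokens through that vertex. Closedness will come for free by running the whole slide sequence forwards and then backwards. This uses the same reversal trick as in Lemma~\ref{lem:occupation}: the reversed slides are legal and restore $\pi$.

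First I dispose of the degenerate cases. If $m=0$, there are no tokens and nothing to prove. If $m=1$, there is a single token and the empty schedule works. Otherwise let $p_1p_2\cdots p_s$ be the spine ($s\ge1$). Choose a leaf $q_0$ adjacent to $p_1$, and take the initial placement $\pi$ with the hole at $q_0$ and labelled tokens on all other $m$ vertices. Write $\tau_i$ for the token on $p_i$. For each $i$, write $\lambda^i_1,\dots,\lambda^i_{a_i}$ for the tokens on the leaves of $p_i$, excluding $q_0$. Every token is then either some $\tau_i$ or some $\lambda^i_j$.

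The forward schedule runs for $i=1,\dots,s$ as follows. The hole moves onto $p_i$: for $i=1$ by sliding $\tau_1$ into $q_0$, and for $i\ge2$ by sliding $\tau_i$ from $p_i$ to $p_{i-1}$. At that moment $\tau_i$ sits on the neighbour of $p_i$ on the $q_0$ side, and $\tau_{i+1}$ (if $i<s$) still sits on $p_{i+1}$. Then, for $j=1,\dots,a_i$ in turn, we slide $\lambda^i_j$ into $p_i$ and back to its leaf. While $\lambda^i_j$ is on $p_i$ it is adjacent to every other leaf token of $p_i$ and to the tokens on both spine neighbours. Hence the recorded placements give the contacts $\tau_i\lambda^i_1$, $\lambda^i_j\lambda^i_{j+1}$ and $\lambda^i_{a_i}\tau_{i+1}$. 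If $a_i=0$, the contact $\tau_i\tau_{i+1}$ is already present at $\pi$, since $p_i$ and $p_{i+1}$ are adjacent.

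Concatenating these chains gives the walk
\[
\tau_1\lambda^1_1\cdots\lambda^1_{a_1}\tau_2\lambda^2_1\cdots\lambda^2_{a_2}\tau_3\cdots\tau_s\lambda^s_1\cdots\lambda^s_{a_s}
\]
in the contact graph. It visits every label exactly once, so it is a Hamilton path. Finally, we append the reverse of the forward sequence to obtain a closed schedule.

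The main point to check carefully is the bookkeeping of which token sits on each spine neighbour of $p_i$ at the moment the leaf tokens of $p_i$ are cycled. The sweep has shifted exactly $\tau_1,\dots,\tau_i$ one step toward $q_0$ and has left $\tau_{i+1},\dots,\tau_s$ untouched. That is precisely what makes consecutive blocks link up through $\tau_i$ and $\tau_{i+1}$. The star case $s=1$ is covered by the same argument with $i=1$ only.
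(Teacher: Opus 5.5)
Your proof is correct and is essentially the paper's argument. With the hole at the root $q_0$, your tokens sit exactly at the child ends of the edges: $\tau_i$ is the token of $p_{i-1}p_i$, and $\tau_1$ that of $q_0p_1$. Your Hamilton path is the paper's spine-traversal ordering of the edges, with pendant edges interleaved. The only difference is in how the contacts are realised. The paper shows that every adjacency of the line graph is witnessed somewhere in the placement component and joins the witnesses by out-and-back walks. You realise just the needed contacts in one explicit sweep along the spine and close the schedule by reversing that sweep.
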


\begin{proof}
Since $C$ is a caterpillar, it is a tree. Choose a vertex $x$ as the root and place the unique hole at $x$. For every edge $e=uv$ of $C$, exactly one of $u$ and $v$ is closer to $x$. We call this vertex the \emph{parent end} of $e$ and the other vertex the \emph{child end}. For each edge $e$, place a labelled token $\tau_e$ at the child end.  Denote this rooted placement by $\pi$. We let $K$ be the component containing $\pi$ in the placement graph.

For a vertex $w\neq x$, the unique edge whose child end is $w$ is called the \emph{parent edge} of $w$, while the edges whose parent end is $w$ are called its \emph{child edges}.

Let $L(C)$ be the line graph of $C$. Let $e$ and $f$ be incident edges in $C$ with a common endpoint $w$. If one is the parent edge of $w$ and the other is a child edge, without loss of generality, let $e$ be the parent edge and $f$ a child edge of $w$. Then $\tau_e$ occupies $w$ and $\tau_f$ occupies the child end of $f$, so $\tau_e$ and $\tau_f$ are already adjacent in $\pi$. Now assume that both $e$ and $f$ are child edges of $w$. We may move the hole along the unique $x$--$w$ path and slide $\tau_e$ into $w$. Then two labelled tokens are adjacent in this placement. Thus every edge of $L(C)$ is realised as a contact at some placement in $K$.

Let $P=v_1v_2\cdots v_s$ be a spine of $C$. If $s=1$, all edges of $C$ are incident with $v_1$, so they may be listed in any order. Assume $s\geq2$. Traverse $P$ from $v_1$ to $v_s$. At $v_1$, list its pendant edges before the spine edge $v_1v_2$. At each internal spine vertex $v_i$, list its pendant edges between the incoming edge $v_{i-1}v_i$ and the outgoing edge $v_iv_{i+1}$. At $v_s$, list its pendant edges after $v_{s-1}v_s$. In this way, every edge of $C$ is listed exactly once and any two consecutive edges in the resulting order are incident. Write this ordering as $e_1,e_2,\ldots,e_m.$ Since every two consecutive edges $e_i$ and $e_{i+1}$ are incident in $C$, they are adjacent as vertices of $L(C)$ for every $1\leq i<m$. Moreover, $e_1,\ldots,e_m$ are precisely the $m$ edges of $C$, and hence the $m$ vertices of $L(C)$. Hence $e_1,e_2,\ldots,e_m$ is a Hamilton path in $L(C)$.

For each $i<m$, there exists a placement in $K$ witnessing the contact between $\tau_{e_i}$ and $\tau_{e_{i+1}}$. Since $K$ is finite and connected, we may reach each chosen witness from $\pi$ and then return to $\pi$. Concatenating these out-and-back walks gives a finite closed schedule whose contact graph contains $\tau_{e_1}\tau_{e_2}\cdots\tau_{e_m}$.
\end{proof}

\begin{theorem}\label{thm:tree-vacancy}
Let $T$ be a tree on $n\ge1$ vertices. There exists a placement of $r(n)$ labelled tokens and $h(n)$ holes on $T$ for which there is a finite closed schedule whose contact graph contains a Hamilton path.
\end{theorem}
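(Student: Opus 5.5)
The plan is to reduce to Lemma~\ref{lem:caterpillar} by locating a large caterpillar inside $T$. I expect $r(n)$ to be essentially $\lfloor 2n/3\rfloor+1$ for $n\ge4$, matching $W(d)$ from the abstract, with $h(n)=n-r(n)$. The definitions of $r$ and $h$ are not visible in the part of the paper above, so I will take these values as a working assumption and adjust the small cases $n\le3$ by hand. The strategy has three steps.

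\emph{Step 1: monotonicity.} I first record two monotonicity facts about closed schedules.
\begin{enumerate}[label=(\alph*)]
\item Take a closed schedule on a subtree $C\subseteq T$, with every vertex of $T-V(C)$ declared a hole. It is still a legal closed schedule on $T$, since each slide moves along an edge of $C\subseteq T$ into a hole. Contacts recorded in $C$ remain contacts in $T$.
\item Suppose a closed schedule has contact graph containing the Hamilton path $\tau_1\cdots\tau_m$. Delete the tokens $\tau_{r+1},\ldots,\tau_m$, turning their vertices into holes, and simply drop their slides. Every remaining slide is still into a hole, because we have only created more holes. The tokens $\tau_1,\dots,\tau_r$ follow exactly the same trajectories and return home, and they still realise the contacts $\tau_i\tau_{i+1}$ for $i<r$.
\end{enumerate}
Consequently, it suffices to find a caterpillar subtree $C\subseteq T$ with at least $r(n)$ edges. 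Lemma~\ref{lem:caterpillar} then gives a placement on $C$ with $|E(C)|$ tokens and one hole. By (b) we keep only the first $r(n)$ tokens of the Hamilton path, and by (a) we view the result on $T$. This yields exactly $r(n)$ tokens and $n-r(n)=h(n)$ holes.

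\emph{Step 2: the combinatorial lemma.} The core is the following claim: every tree on $n\ge4$ vertices contains a caterpillar with at least $\lfloor 2n/3\rfloor+1$ edges. The extremal case is the spider with three arms of length $a$, where $n=3a+1$. There the best caterpillar consists of two full arms together with the first vertex of the third arm, giving $2a+1=\lfloor 2n/3\rfloor+1$ edges. This suggests the bound is tight.

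To prove the claim, I would take a centroid-type vertex $c$ and order the branches at $c$ by size, $s_1\ge s_2\ge\cdots$, with $\sum s_i=n-1$. I would then build a caterpillar whose spine runs through $c$ along longest paths in the two largest branches. This caterpillar includes every neighbour of $c$ and every neighbour of spine vertices as legs. Each leg contributes an edge, so branches that are not paths are not wasted. The accounting would be an induction: contracting or removing a leaf pendant to a spine vertex costs one vertex and one edge. This reduces the problem to trees where the leg contributions are minimal, namely spiders. For a spider, the computation is direct: the best caterpillar contains the two longest arms plus one edge into every other arm, and the bound $2(n-1)/3+1$ follows from $s_1+s_2\ge \tfrac23\sum s_i$ when there are at least three arms. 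If there are at most two arms, $T$ is a path, and the whole of $T$ is already a caterpillar.

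\emph{Main obstacle.} The difficult part is making the reduction from general trees to spiders rigorous. A branch that is itself bushy is not a path, so a longest path in it may pick up fewer vertices than the branch contains. I would handle this by charging: every vertex off the chosen spine is either a leg of the caterpillar or lies in a subtree hanging off a leg. I would then show that the total loss is at most the loss in the spider with the same branch sizes, perhaps by induction on $n$, deleting a deepest leaf not on the spine. Finally, I would verify $n\le3$ separately by direct inspection of paths on at most three vertices.
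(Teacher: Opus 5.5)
Your monotonicity observations in Step 1 are correct. Your guess for $r(n)$ is also exact: the paper sets $h(n)=\lfloor (n-1)/3\rfloor$, and $n-\lfloor (n-1)/3\rfloor=\lfloor 2n/3\rfloor+1$ for every $n\ge1$. The gap is Step 2. Its claim is false, and with it the whole strategy of realising the schedule on a single caterpillar subtree of $T$ fails.

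Take the spider with four arms $oa_ib_i$ of length two, so $n=9$ and $r(9)=7$. A caterpillar subtree cannot contain three of the $b_i$. If it did, then $o,a_i,a_j,a_k$ would all be non-leaves, and they induce a claw rather than a path. Hence every caterpillar subtree has at most $7$ vertices, so at most $6$ edges, fewer than the $7$ you need.

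Your spider computation fails at exactly this point. The inequality $s_1+s_2\ge\frac23\sum s_i$ holds for three branches but not for four or more, so the three-armed spider is not the extremal case. Bushier trees are far worse. In a complete binary tree every caterpillar subtree has only $O(\log n)$ edges, because its spine is a path no longer than the diameter and all degrees are at most three. No charging argument or leaf-deletion induction can repair the claim.

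The missing idea is that the caterpillar must be built from \emph{moving} tokens, not from vertices of $T$. The holes are what let tokens pass through branch points, and a static caterpillar cannot use them.

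The paper inducts by deleting three leaves $\ell_1,\ell_2,\ell_3$ with neighbours $a_1,a_2,a_3$. On $T_0=T-\{\ell_1,\ell_2,\ell_3\}$, the induction hypothesis gives a closed schedule $W$ whose tokens $Q_1,\dots,Q_p$ form a Hamilton path of contacts. Lemma~\ref{lem:occupation} lets one assume that some $Q_{\alpha_j}$ sits on $a_j$ in every period.

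The trajectories of $Q_1,\dots,Q_p$ then serve as the spine of an \emph{abstract} caterpillar, with the fixed lanes $\ell_j$ attached as leaves to $Q_{\alpha_j}$. Lemma~\ref{lem:caterpillar} is applied to this abstract caterpillar with a single meta-hole. The meta-schedule is then simulated on $T$ by running $W$ periodically, performing each meta-slide or meta-contact at a moment when the two lanes are physically adjacent.

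Each induction step adds three vertices, two tokens and one hole. This gives $r(n)=n-\lfloor (n-1)/3\rfloor$ for every tree, whatever its shape.
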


\begin{proof}
Let $h(n)=\lfloor(n-1)/3\rfloor, r(n)=n-h(n).$ We proceed by induction on $n$. If $T$ is a path, place the $r(n)$ labelled tokens on consecutive vertices. Their static contacts already form a Hamilton path, so the empty schedule suffices.

Suppose that $T$ is not a path. Then choose three distinct leaves $\ell_1,\ell_2,\ell_3$, let
\[
T_0=T-\{\ell_1,\ell_2,\ell_3\}.
\]
Then $T_0$ is a tree. For $j=1,2,3$, let $a_j$ be the neighbour of $\ell_j$. Since
\[
h(n-3)=h(n)-1,
\qquad
r(n-3)=r(n)-2,
\]
the induction hypothesis gives a placement on $T_0$ with $h(n)-1$ holes and $r(n)-2$ labelled tokens, together with a finite closed schedule $W$ whose contact graph contains a Hamilton path. Let $p=r(n)-2$. Relabel these labelled tokens as $Q_1,\ldots,Q_p$ in the order of this Hamilton path.

By inserting the excursions from  Lemma~\ref{lem:occupation}, for each $j$, we may assume that some labelled token $Q_{\alpha_j}$ occupies $a_j$ at a recorded point of every period of $W$. Thus for $1\le i<p$, $1\le j\le3$, we have
\begin{equation}\label{eq:lane-pairs}
Q_iQ_{i+1},
\qquad
Q_{\alpha_j}\ell_j
\end{equation}
are adjacent at recurring placements of $W$. Here $Q_i$ denotes the moving lane traced by the labelled token $Q_i$ while $W$ is performed, whereas each $\ell_j$ is a fixed lane.

Construct a caterpillar $C$ whose spine is $Q_1Q_2\cdots Q_p$ and whose leaf $\ell_j$ is attached to $Q_{\alpha_j}$. See Figure~\ref{fig:three-leaf}. Its $p+3$ vertices represent the $p$ dynamic lanes $Q_1,\ldots,Q_p$ and the three fixed lanes $\ell_1,\ell_2,\ell_3$, while its $p+2=r(n)$ edges index the labelled meta-tokens.
By Lemma~\ref{lem:caterpillar}, the one-hole puzzle on $C$ has a rooted placement of $r(n)$ labelled meta-tokens and a finite closed meta-schedule $M$ whose contact graph contains a Hamilton path.

We now simulate $M$ on the original tree $T$. Figure~\ref{fig:flattening} illustrates the correspondence between the meta-level and the physical tree. During the simulation we keep the following conditions:
\begin{enumerate}[label=(I\arabic*),leftmargin=3.5em]
\item each lane of $C$ carries either one labelled meta-token or the unique meta-hole;
\item a lane carrying a labelled meta-token contains one labelled physical token with the same label;
\item the meta-hole lane and the $h-1$ background holes are empty in $T$;
\item each dynamic lane $Q_i$ is represented at the vertex prescribed by the current state of $W$.
\end{enumerate}

At the initial state of $W$, place the labelled physical token corresponding to each labelled meta-token at the vertex represented by its lane, and leave the lane carrying the unique meta-hole empty. The remaining $h-1$ holes of the initial placement on $T_0$ are designated as the background holes. Thus $(I1)-(I4)$ hold initially.

Consider one reference slide of $W$, say $Q_i:u\to v$, where $v$ is a background hole. If the $Q_i$-lane carries a labelled meta-token, perform the same physical slide and declare $u$ to be the new background hole. If the $Q_i$-lane is the meta-hole, no labelled physical token moves. The meta-hole follows the lane to $v$, and $u$ becomes the new background hole. In either case the invariant is preserved.

To simulate a meta-slide across an edge of $C$, continue $W$ until the corresponding pair in~\eqref{eq:lane-pairs} is adjacent in $T$, and slide the labelled physical token into the meta-hole. Similarly, when $M$ records a contact between two labelled meta-tokens, continue $W$ until the edge of $C$ occupied by these labelled tokens is realised by two adjacent physical lanes, and record the corresponding physical contact. Since every pair in~\eqref{eq:lane-pairs} is witnessed at a prescribed placement of $W$, periodic repetition of the closed schedule $W$ makes every such adjacency recur, so each meta-operation can be realised after finitely many reference steps.

After the last meta-step, complete the current period of $W$. The dynamic lanes, all meta-labels, and the background holes return to their initial positions. Hence the physical schedule is closed. It has $(h-1)+1=h$ holes and $p+2=r(n)$ labelled tokens, and its contact graph contains the Hamilton path supplied by Lemma~\ref{lem:caterpillar}.
\end{proof}

\begin{figure}[H]
\centering
\includegraphics[width=0.62\textwidth]{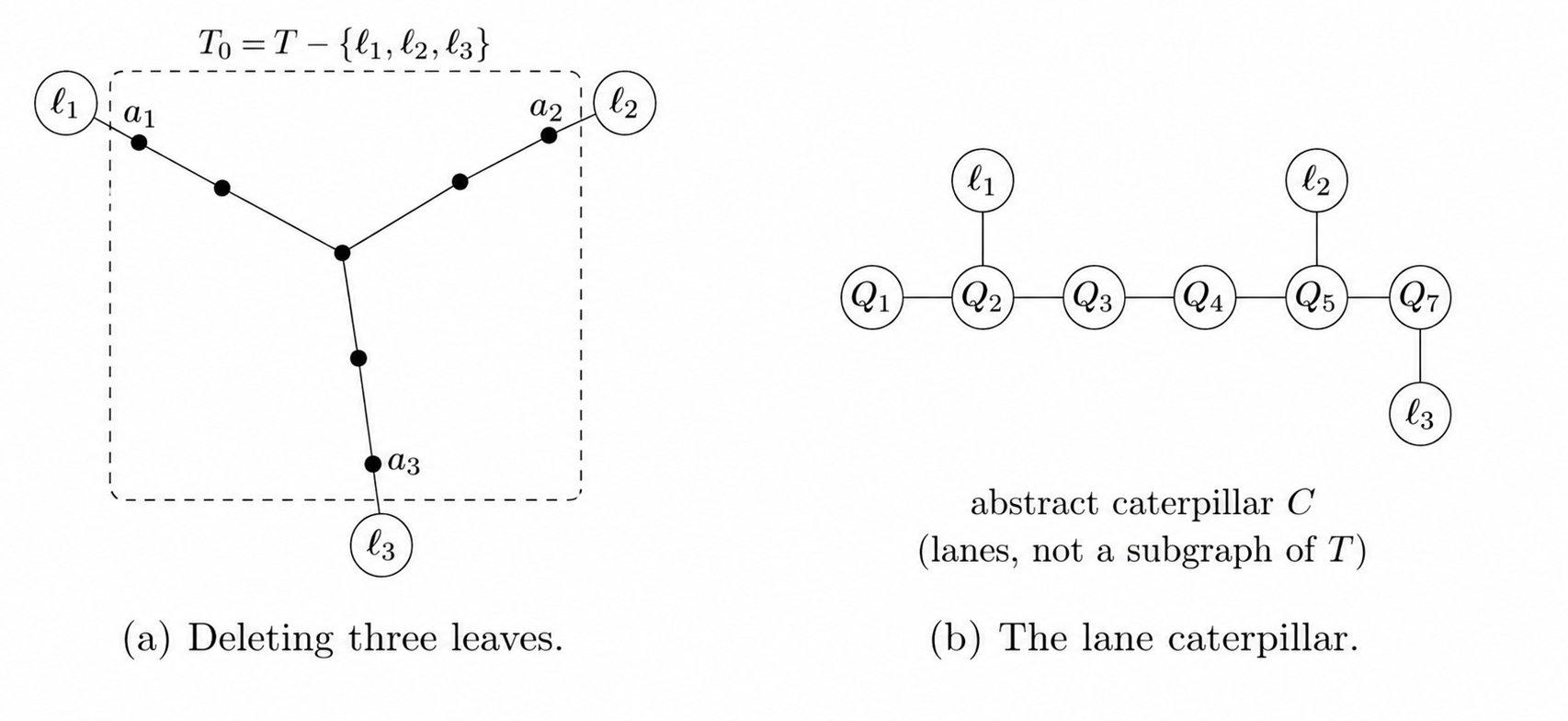}
\caption{The three-leaf reduction in the proof of Theorem~\ref{thm:tree-vacancy}.}
\label{fig:three-leaf}
\end{figure}

\begin{figure}[H]
\centering
\includegraphics[width=0.60\textwidth]{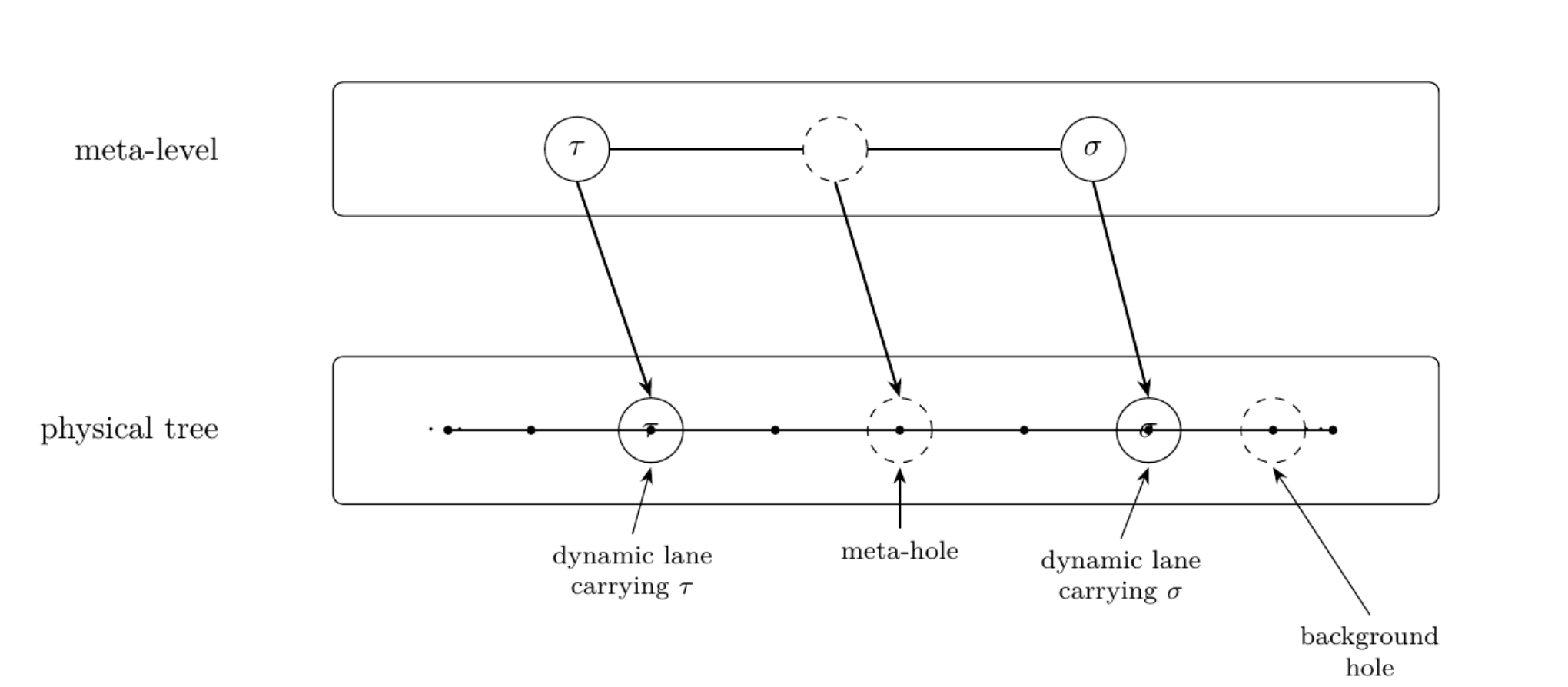}
\caption{The correspondence between the meta-schedule and the physical tree.}
\label{fig:flattening}
\end{figure}

\begin{corollary}\label{cor:connected-vacancy}
Let $G$ be a connected graph on $n$ vertices. There is a placement of $n-\lfloor(n-1)/3\rfloor$ labelled tokens and $\lfloor(n-1)/3\rfloor$ holes on $G$ for which there exists a finite closed schedule whose contact graph contains a Hamilton path.
\end{corollary}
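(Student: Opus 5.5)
The plan is to reduce the corollary to Theorem~\ref{thm:tree-vacancy} by passing to a spanning tree. Since $G$ is connected on $n$ vertices, it has a spanning tree $T$ with $V(T)=V(G)$ and $E(T)\subseteq E(G)$. Theorem~\ref{thm:tree-vacancy} applied to $T$ gives a placement of $r(n)=n-\lfloor (n-1)/3\rfloor$ labelled tokens and $h(n)=\lfloor(n-1)/3\rfloor$ holes on $T$. It also gives a finite closed schedule on $T$ whose contact graph contains a Hamilton path. We take the same placement on $G$, which is legitimate because the two graphs have the same vertex set.

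Next we check that this schedule is also a valid schedule on $G$. Each slide in the schedule moves a token along an edge of $T$ into an adjacent hole. That edge is an edge of $G$, and whether a vertex is a hole depends only on the current placement, not on the underlying graph. So every slide is a legal token slide in $G$. After the last slide, every token is back at its initial vertex, so the schedule is closed on $G$ as well.

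Finally we compare contact graphs. At each recorded placement, any two tokens adjacent in $T$ are also adjacent in $G$, since $E(T)\subseteq E(G)$. Hence the contact graph of the schedule on $G$ contains the contact graph on $T$ as a spanning subgraph on the same label set. It therefore contains the same Hamilton path.

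There is essentially no obstacle here. The only point needing care is that "contact" and "slide" are defined relative to the host graph, and both notions are monotone under adding edges. The edge case $n=1$ is trivial: there is a single token, no holes, and the empty schedule works.
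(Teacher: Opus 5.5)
Your proposal is correct and matches the paper's proof: apply Theorem~\ref{thm:tree-vacancy} to a spanning tree $T$ of $G$, then observe that every slide and every contact in $T$ remains valid in $G$ because $E(T)\subseteq E(G)$. Your separate treatment of $n=1$ is harmless but unnecessary, since the theorem already covers trees on $n\ge1$ vertices.
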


\begin{proof}
Apply Theorem~\ref{thm:tree-vacancy} to a spanning tree of $G$. Every slide and every contact used in that schedule is also available in $G$.
\end{proof}

Let $A$ be a digraph, and $U(A)$ denote the underlying graph in which $uv$ is an edge whenever at least one of $uv$ and $vu$ is an arc of $A$. A slide from $u$ to a hole at $v$ is legal only if $uv\in E(A)$.

The \emph{ordinary placement graph} has the labelled placements as its vertices and two placements are adjacent when one is obtained from the other by a token slide along an edge of $U(A)$. The \emph{directed placement graph} has the same vertices, and its arcs are the legal directed slides in $A$.

If the labelled token $x$ occupies $u$, the labelled token $y$ occupies $v$, and $uv\in E(A)$, we record the \emph{direct event} $x\xRightarrow{1}y.$ A \emph{handover event} $x\xRightarrow{2}y$ occurs when $x$ leaves a carrier and the immediately following slide places $y$ on the same carrier, see Figure~\ref{fig:directed-moves}(b). We write $x\Rightarrow y$ if either $x\xRightarrow{1}y$ or $x\xRightarrow{2}y$ occurs. In the lifting argument, the carrier segment between the two
switches will be used as a directed rung. See Figure~\ref{fig:directed-moves}(b).

\begin{lemma}\label{lem:placement-scc}
Let $A$ be a finite strong digraph, and let $1\leq r<|V(A)|$. For a fixed set of $r$ labelled tokens, the strongly connected components of the directed placement graph are exactly the connected components of the ordinary placement graph on $U(A)$.
\end{lemma}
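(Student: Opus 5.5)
The plan is to show that every legal directed slide can be undone by a directed walk in the placement graph. Every arc of the directed placement graph is an edge of the ordinary placement graph. Hence each strong component lies inside one ordinary component. For the converse it suffices to prove the following: if $\pi'$ arises from $\pi$ by a legal directed slide of a token $\tau$ from $u$ into the hole $v$ (so $uv\in E(A)$), then $\pi$ is reachable from $\pi'$ by directed slides. Granting this, every ordinary edge is traversable in both directions within the directed graph, since an ordinary slide is directed one way or the other. Thus each ordinary component is strongly connected in the directed sense.

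To reverse the slide $u\to v$, use strong connectivity of $A$ and choose a directed path $v=w_0w_1\cdots w_k=u$ in $A$. Together with the arc $uv$ this gives a directed cycle $Z=u\,w_0w_1\cdots w_{k-1}u$ through the arc $uv$. Choose $Z$ to be a shortest such cycle, hence a simple cycle. The idea is to rotate the occupants of $Z$ forward along $Z$. In $\pi'$ the vertex $u$ is a hole, since $\tau$ has just left it. Slide the occupant of $w_{k-1}$ (if any) into $u$, then the occupant of $w_{k-2}$ into $w_{k-1}$, and so on, moving each token one step forward along $Z$. Each such move is along an arc of $Z$, hence legal. Empty vertices simply propagate the hole.

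Repeating full rotations cyclically shifts the pattern of tokens and holes on $Z$ forward. After $|Z|-1$ further single-step advances of every token, each token on $Z$ has moved once around the cycle back to its position in $\pi$, except for the one-step offset of $\tau$. I would organise this as: in $\pi'$ the configuration on $Z$ is that of $\pi$ with $\tau$ advanced one step. Advancing every token on $Z$ by $|Z|-1$ further steps along $Z$ restores $\pi$ exactly, since the cyclic rotation of a labelled arrangement on a cycle by $|Z|$ steps is the identity. Tokens off $Z$ never move.

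The main obstacle is realising ``advance every token by one step'' as legal slides when $Z$ may be completely full apart from the single hole. It may also be that there are several holes on $Z$. If there is at least one hole on $Z$ (true in $\pi'$, at $u$), a full rotation by one step is achieved by moving tokens into the hole in backward order around the cycle, as described above. The hole then travels backwards once around $Z$, ending where it started, relative to the shifted pattern. I must check this bookkeeping carefully when several holes lie on $Z$: process each maximal run of tokens ending just before a hole, starting from the front of the run. Every token then advances exactly once and the pattern shifts by one. The hypothesis $r<|V(A)|$ ensures a hole exists at all, and it is needed only to make the slide relation nonempty.
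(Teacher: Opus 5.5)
Your reduction to reversing a single legal slide, the choice of a simple directed cycle $Z$ through $uv$, and your procedure for advancing every token on $Z$ by one step are all fine, and they match the paper's setup. The gap is the bookkeeping claim that $|Z|-1$ further uniform advances turn $\pi'$ back into $\pi$. This is false as soon as $Z$ carries a token other than $\tau$. The configuration $\pi'$ is not a rotation of $\pi$, because only $\tau$ has moved. After $|Z|-1$ uniform advances, $\tau$ is back at $u$, but every other token on $Z$ sits one step \emph{behind} its position in $\pi$. Your sentence about ``the one-step offset of $\tau$'' has the roles reversed.

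For instance, take the cycle $u\to v\to w\to u$ with $\tau$ at $u$, $\sigma$ at $w$ and $v$ a hole. Two uniform advances from $\pi'$ give $\tau$ at $u$, $\sigma$ at $v$ and the hole at $w$, which is not $\pi$. More conceptually, a uniform advance preserves, for each token, the number of holes between it and the next token along $Z$; this is the gap vector in the paper's proof. The slide $u\to v$ instead moves one unit of vacancy from the gap in front of $\tau$ to the gap behind it. No sequence of uniform rotations can undo that, so the obstacle you flagged (realising a uniform advance by legal slides) is not the real one.

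The missing ingredient is the paper's vacancy-transport move. Start from $\pi'$, where $u$ is a hole. Slide forward only the tokens of $Z$ other than $\tau$, one at a time in backward cyclic order. Begin with the last token before $u$ and stop just before $\tau$. Each slide is legal because the vertex ahead is either a hole or has just been vacated.

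This advances every token except $\tau$ by one step, so the resulting placement is $\pi$ shifted forward by one step along $Z$. Now $|Z|-1$ of your uniform advances do restore $\pi$: every token other than $\tau$ makes exactly one full lap, and $\tau$ makes $|Z|-1$ steps. In the paper this step appears as the observation that the moves $g_i\mapsto g_i-1$, $g_{i-1}\mapsto g_{i-1}+1$ reach any gap vector with the same number of holes. Combined with the rotation step, this shows that any two placements on $Z$ with the same cyclic order are mutually reachable, which gives the reversal of $u\to v$.
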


\begin{proof}
It is enough to prove that every legal directed slide can be reversed by a directed walk. Suppose a labelled token slides along $uv\in E(A)$ into a hole at $v$. Since $A$ is strong, $uv$ lies on a directed cycle $Z$. We keep all labelled tokens outside $Z$ fixed and use only slides along $Z$.

Slides on a directed cycle preserve the cyclic order of the labels. Conversely, we show that any two placements with the same cyclic order are mutually reachable. Write the labels in cyclic order as $z_1,\ldots,z_t$, and let $g_i$ be the number of holes after $z_i$ and before $z_{i+1}$, with indices read cyclically. Whenever $g_i>0$, sliding $z_i$ changes the gap vector by
\begin{equation}\label{eq:gap-transfer}
g_i\mapsto g_i-1,
\qquad
g_{i-1}\mapsto g_{i-1}+1.
\end{equation}
First, repeated use of this move transports vacancy units around the cycle, so any gap vector with the same total number of holes can be reached.
Second, since there is at least one hole, choose $i$ with $g_i>0$. Slide the labelled tokens in the order
\[
z_i,z_{i-1},\ldots,z_{i+1},
\]
with indices read cyclically. Each slide is enabled by the vacancy created by the preceding slide. At the end, the gap vector is restored and every labelled token has advanced by one vertex of $Z$. Hence any two
placements on $Z$ with the same cyclic order are mutually reachable. In particular, the placements immediately before and after the slide $u\to v$ lie in the same directed strong component on $Z$.

Now consider an edge of the ordinary placement graph. At least one orientation of the corresponding carrier edge belongs to $A$, so one of the two slides is legal. The preceding argument makes that slide reversible. Thus every ordinary edge lies within a directed strong component. Hence each ordinary connected component is contained in one directed strong component. The reverse containment is immediate, because every directed slide is also an ordinary slide.
\end{proof}

\begin{lemma}\label{lem:contact-symmetry}
Let $A$ be a finite strong digraph and let $\pi$ be a placement with at least one hole. Suppose that the labelled tokens $x$ and $y$ occupy $u$ and $v$ respectively, and $uv\in E(A)$. Then both ordered events
\[
x\Rightarrow y
\qquad\text{and}\qquad
y\Rightarrow x
\]
can be witnessed within the strongly connected component of the directed placement graph containing $\pi$.
\end{lemma}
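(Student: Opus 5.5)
The event $x\Rightarrow y$ is immediate. At $\pi$ the token $x$ sits on $u$, the token $y$ sits on $v$, and $uv\in E(A)$, so $x\xRightarrow{1}y$ is recorded at $\pi$ itself. The whole task is therefore the reverse event $y\Rightarrow x$. If $vu$ is also an arc of $A$, then $y\xRightarrow{1}x$ is recorded at $\pi$ and we are done. So assume only the orientation $u\to v$ is present.

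The plan is to witness a handover $y\xRightarrow{2}x$. Suppose we reach a placement $\pi'$ in the strong component of $\pi$ in which $x$ is still on $u$, $y$ is still on $v$, and the out-neighbour $w$ of $v$ on some directed cycle is a hole. Then we perform the legal slide $y:v\to w$ and immediately afterwards the legal slide $x:u\to v$. Here $y$ leaves the carrier $v$ and the very next slide places $x$ on it, which is exactly $y\xRightarrow{2}x$. Both slides are legal directed slides, so by Lemma~\ref{lem:placement-scc} the resulting placement is still in the same strong component.

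To produce $\pi'$, use that $A$ is strong, so $uv$ lies on a directed cycle $Z$; since $vu\notin A$, $Z$ has length at least $3$. Along $Z$ the labels appear in cyclic order $z_1,\dots,z_t$ with $x=z_i$, $y=z_{i+1}$ and gap $g_i=0$, exactly as in the proof of Lemma~\ref{lem:placement-scc}.
\begin{enumerate}[label=(\roman*),leftmargin=2.5em]
\item First bring at least one hole onto $Z$.
\item Then move it to the gap $g_{i+1}$ just after $y$ using the transfer move \eqref{eq:gap-transfer}. A vacancy in $g_j$ with $j\ne i$ travels backward through $g_{j-1},\dots,g_{i+1}$, and this only slides $z_j,\dots,z_{i+2}$, never $x$ or $y$.
\end{enumerate}
All these slides are reversible inside the strong component by Lemma~\ref{lem:placement-scc}. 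When $g_{i+1}>0$, the successor $w$ of $v$ on $Z$ is a hole, which gives $\pi'$.

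I expect step (i) to be the main obstacle. If $Z$ is fully occupied, pull in a hole along a shortest ordinary path from a hole to $V(Z)$, as in Lemma~\ref{lem:occupation}. This is fine unless the entry vertex is $u$ or $v$, because then $x$ or $y$ is displaced.

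My first remedy is to choose $Z$ and the path so that the entry vertex avoids $\{u,v\}$, for instance by taking a shortest directed cycle through $uv$ together with a shortest path to it. If that is impossible, I would accept the displacement: by Lemma~\ref{lem:placement-scc} we may move freely in the ordinary placement graph and then rotate $Z$ by the full-advance move of that lemma. This restores $x,y$ to consecutive positions $u',v'$ along an arc of $Z$ with a hole present on $Z$, and the handover argument above applies at $(u',v')$ instead of $(u,v)$.
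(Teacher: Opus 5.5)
\textbf{Gap: step (i) is not proved in the case you flagged.} The rest of your argument is correct: $x\xRightarrow{1}y$ is recorded at $\pi$, and $vu\in E(A)$ gives $y\xRightarrow{1}x$. Also, once $x$ is on $u$, $y$ is on $v$ and some hole lies on a directed cycle $Z$ through $uv$, your gap transfers never move $x$ or $y$, and $y:v\to w$, $x:u\to v$ then produces $y\xRightarrow{2}x$. The problem is step (i) when every route for a hole onto every such $Z$ enters through $u$ or $v$. Choosing a shortest cycle and a shortest path does not prevent this. Your fallback asserts a configuration that need not exist: once $x$ or $y$ has been pushed off $Z$, rotating $Z$ cannot bring it back, and sliding it back uses up the hole.

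Concretely, let $A$ have vertices $u,v,w,h$ and arcs $uv,vw,wu,uh,hu$, with $x,y,z$ on $u,v,w$ and the hole on $h$. The only directed cycle through $uv$ is $Z=uvwu$, and the hole can enter it only at $u$. Since $h$ is adjacent only to $u$, the token $x$ never leaves $\{u,h\}$, and $x$ sits on $u$ only when $h$ is the hole. So no placement in the component has $x$ and $y$ consecutive on $Z$ with a hole on $Z$, and your handover set-up on $Z$ has nothing to apply to. The event $y\Rightarrow x$ is still witnessable here, but as a direct event: slide $x$ to $h$, rotate $Z$ until $y$ reaches $u$, and use the arc $uh$.

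\textbf{The missing idea, as the paper does it.} Split according to whether a hole is reachable from $v$ by a dipath in $A-u$.
\begin{itemize}
\item If it is, move the vacancy back along a shortest such dipath. This frees an out-neighbour $p_1$ of $v$ without moving $x$ or $y$. The vertex $p_1$ need not lie on a cycle through $uv$, and $y:v\to p_1$, $x:u\to v$ is already the handover.
\item If it is not, then $S=\Reach_{A-u}(v)$ is fully occupied and forward-closed in $A-u$. Hence a shortest dipath $u=q_0\to\cdots\to q_t$ to a hole has $q_1,\dots,q_t\notin S$. It therefore meets the cycle $Z$ formed by $uv$ and a $v$--$u$ dipath inside $S\cup\{u\}$ only at $u$. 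Move the vacancy back along this dipath and slide $x$ from $u$ to $q_1$. Then rotate $Z$ with the hole parked at $u$ until $y$ arrives at $u$. The arc $uq_1$ gives $y\xRightarrow{1}x$.
\end{itemize}
So in the bad case you should aim for a direct event, with $x$ parked on an out-neighbour of $u$ off the cycle, rather than try to restore a handover configuration on $Z$.
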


\begin{proof}
The arc $uv$ already gives the direct event $x\xRightarrow{1} y$. If $vu\in E(A)$, then $y\xRightarrow{1}x$ is also direct, so assume that $vu\notin E(A)$.

\emph{Case 1: a hole is reachable from $v$ in $A-u$.}
Choose a shortest dipath
\[
v=p_0\to p_1\to\cdots\to p_t,
\]
where $p_t$ is a hole. By minimality, $p_1,\ldots,p_{t-1}$ are occupied. Move the vacancy backwards along
this path until $p_1$ becomes empty, and then perform
\[
y:v\to p_1,
\qquad
x:u\to v.
\]
They give a handover event $y\xRightarrow{2}x$  on the carrier $v$.

\emph{Case 2: no hole is reachable from $v$ in $A-u$.}
Let $S=\Reach_{A-u}(v)$. Every vertex of $S$ is
occupied. Choose a dipath from $v$ to $u$ that first reaches $u$ at its final vertex,
\[
v=c_1\to c_2\to\cdots\to c_s\to u.
\]
Together with $u\to v$, it gives the directed cycle
\begin{equation}\label{eq:cycle-Z}
Z=u\to c_1\to\cdots\to c_s\to u.
\end{equation}
Choose a shortest dipath from $u$ to a hole,
\[
u=q_0\to q_1\to\cdots\to q_t.
\]
Since $S$ is forward-closed in $A-u$, if a shortest $u$ to a hole dipath enters $S$, it can leave $S$ only through $u$. This would force the path to revisit $u$, contradicting simplicity. Hence the vertices $q_1,\ldots,q_t$ lie outside $S$. Since $c_1,...,c_s$ are contained in $S$, this $u$ to a hole dipath meets the directed cycle $Z$ only at $u$. Move the vacancy backwards along this path and then slide $x$ from $u$ to $q_1$. The vertex $u$ is now empty.

Keep the hole at $u$ while ticking the occupied vertices of $Z$. One tick consists of the slides
\[
c_s\to u,
\quad c_{s-1}\to c_s,
\quad\ldots,
\quad c_1\to c_2,
\quad u\to c_1.
\]
At the end of the tick the hole is again at $u$. After $s-1$ ticks, the labelled token $y$ is at $c_s$. Slide it to $u$.
The labelled token $x$ is still at $q_1$, so the arc $u\to q_1$ witnesses the direct event $y\xRightarrow{1}x$.

All placements used above lie in the ordinary placement component of $\pi$. By Lemma~\ref{lem:placement-scc}, they therefore lie in the same strongly connected component.
\end{proof}

\begin{figure}[H]
\centering
\includegraphics[width=0.95\textwidth]{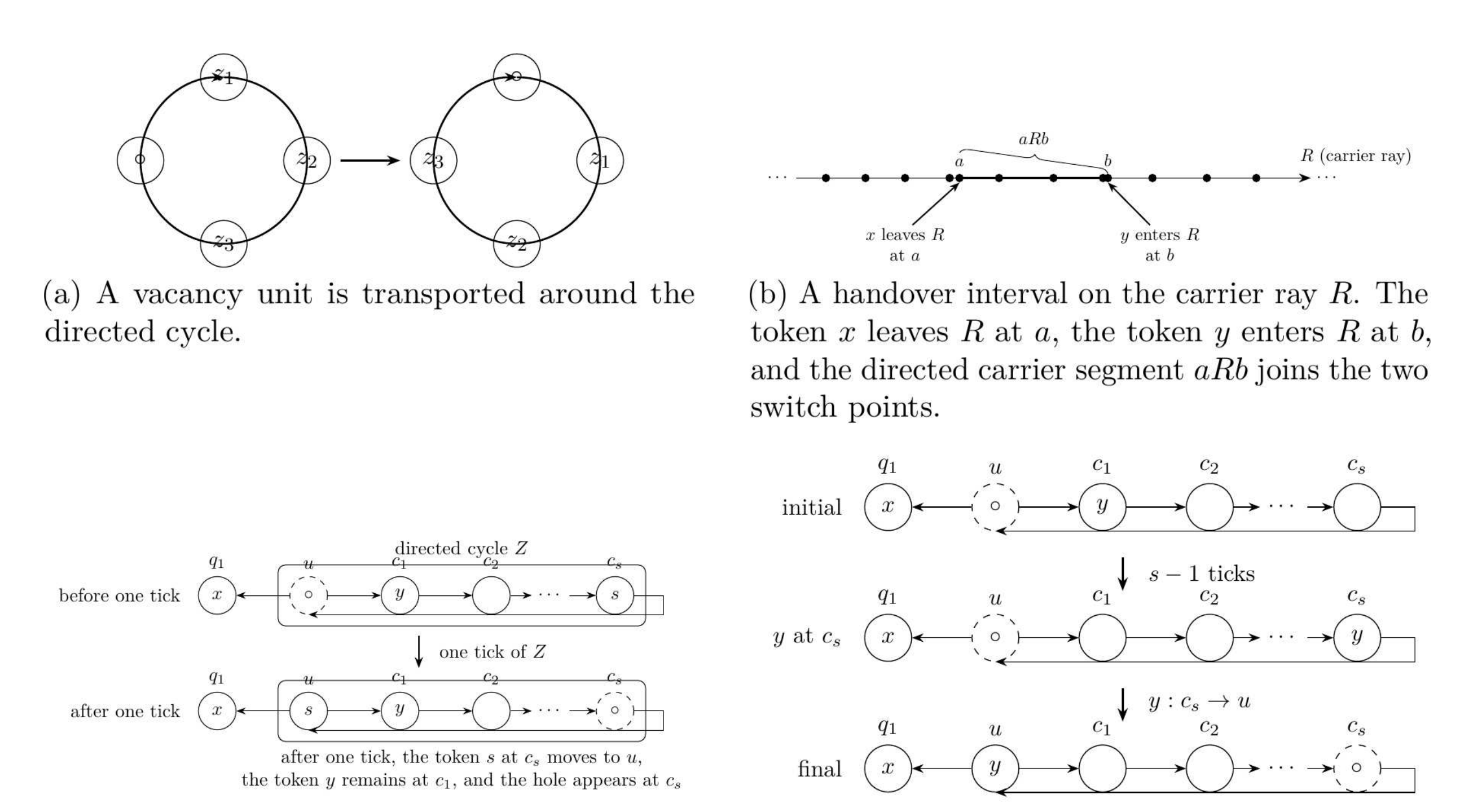}
\caption{Directed moves used in Lemmas~\ref{lem:placement-scc} and~\ref{lem:contact-symmetry}. Dashed circles denote holes.}
\label{fig:directed-moves}
\end{figure}

\begin{theorem}\label{thm:directed-augmentation}
Let $A$ be a strong digraph on $n\ge4$ vertices. There exists a placement of $r$ labelled tokens and $h$ holes on $A$ for which there is a finite closed directed schedule whose ordered event graph contains the bidirected Hamilton path
\[
z_1\leftrightarrow z_2\leftrightarrow\cdots\leftrightarrow z_r.
\]
Moreover, each arrow is witnessed by a direct event or a handover event.
\end{theorem}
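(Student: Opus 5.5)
The plan is to take the undirected schedule from Corollary~\ref{cor:connected-vacancy} on the underlying graph $U(A)$ and convert it into a directed one. Here $h=\lfloor(n-1)/3\rfloor$ and $r=n-h$; since $n\ge4$ we have $h\ge1$, so at least one hole is always present. Corollary~\ref{cor:connected-vacancy} applies because $A$ is strong, hence $U(A)$ is connected. It yields a placement $\pi$ and a finite closed schedule $W$ of ordinary slides along edges of $U(A)$. Relabelling the tokens as $z_1,\dots,z_r$ along the Hamilton path of the contact graph, for each $i<r$ we fix a placement $\sigma_i$ on $W$ at which $z_i$ and $z_{i+1}$ occupy adjacent vertices of $U(A)$.

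First we check that all of these placements lie in one strong component of the directed placement graph. Every $\sigma_i$ lies in the ordinary placement component $K$ of $\pi$, because $W$ only uses ordinary slides. By Lemma~\ref{lem:placement-scc}, which applies since $1\le r<|V(A)|$, the component $K$ is also a strongly connected component of the directed placement graph. Hence for each $i$ there are directed walks $\pi\to\sigma_i$ and $\sigma_i\to\pi$ consisting of legal directed slides.

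Next we produce both ordered events at each contact. At $\sigma_i$, the tokens $z_i$ and $z_{i+1}$ occupy $u$ and $v$ with $uv\in E(U(A))$. After renaming the tokens if necessary, we may assume $uv\in E(A)$. Lemma~\ref{lem:contact-symmetry} then gives directed walks inside $K$ that witness $z_i\Rightarrow z_{i+1}$ and $z_{i+1}\Rightarrow z_i$, each by a direct or a handover event. We form the closed directed schedule by concatenating, for $i=1,\dots,r-1$, three pieces: a directed walk from $\pi$ to the start of the first witnessing walk, the witnessing walks themselves joined by directed walks inside $K$, and a directed walk back to $\pi$. All these walks exist by strong connectivity of $K$. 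The resulting schedule is finite and returns every labelled token to its initial vertex, since it starts and ends at the placement $\pi$. Its ordered event graph therefore contains every arrow $z_i\leftrightarrow z_{i+1}$, each realised by a direct or handover event.

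The main point to verify is that the events produced by Lemma~\ref{lem:contact-symmetry} survive concatenation. A handover event requires two consecutive slides: first $x$ leaves the carrier, and the immediately following slide puts $y$ onto it. We must make sure that no connecting walk is spliced between these two slides. This holds because each witnessing walk from Lemma~\ref{lem:contact-symmetry} is inserted as a contiguous block and only joined to the rest at its endpoints. A direct event depends only on a single recorded placement, so concatenation cannot destroy it.
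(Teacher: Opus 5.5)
Your proposal is correct and follows essentially the same route as the paper: apply the vacancy theorem on (a spanning tree of) $U(A)$, identify the resulting ordinary placement component with a strong component of the directed placement graph via Lemma~\ref{lem:placement-scc}, obtain both ordered events at each contact from Lemma~\ref{lem:contact-symmetry}, and concatenate the finitely many witnesses by directed walks inside that component. Your explicit checks that $h\ge1$ (so both lemmas apply) and that handover witnesses are spliced in as contiguous blocks are details the paper leaves implicit.
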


\begin{proof}
Let $T$ be a spanning tree of $U(A)$, and let $h(n)=\lfloor(n-1)/3\rfloor,r=n-h$. By Theorem~\ref{thm:tree-vacancy}, one ordinary placement component contains contacts $z_iz_{i+1}$ for all $1\le i<r$. Lemma~\ref{lem:placement-scc} identifies this component with a strongly connected component of the directed placement graph. Within that component, Lemma~\ref{lem:contact-symmetry} gives witnesses for both $z_i\Rightarrow z_{i+1}$ and $z_{i+1}\Rightarrow z_i$.

There are only finitely many required witnesses. Join them in any chosen order by directed walks inside the strong component, and after the last witness return to the initial placement. The resulting directed schedule is closed and has the required bidirected Hamilton path in its ordered event graph.
\end{proof}

\section{From a finite schedule to an infinite grid}

We first record the finite-avoidance fact used throughout the lifting argument.

\begin{lemma}\label{lem:fresh}
Let $\cR$ be a finite family of pairwise disjoint carrier rays in a digraph $D$, and let $A=A(\cR)$. Suppose that $RS\in E(A)$. For every finite set $F\subseteq V(D)$ and vertices $r\in R$ and $s\in S$, there exists a clean $R$--$S$ linkage disjoint from $F$ whose initial vertex on $R$ lies strictly after $r$ and whose terminal vertex on $S$ lies strictly after $s$.
\end{lemma}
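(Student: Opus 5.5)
The argument is a pigeonhole count: infinitely many pairwise disjoint linkages, but only finitely many can be ruled out.

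Since $RS\in E(A)$, by definition there is an infinite family $\mathcal{P}$ of pairwise vertex-disjoint clean $R$--$S$ linkages. Each such linkage is a dipath whose first vertex lies on $R$, whose last vertex lies on $S$, and whose interior avoids every ray of $\cR$.

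We show that only finitely many members of $\mathcal{P}$ can fail the required conditions, by bounding each kind of failure separately.

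\emph{Meeting $F$.} Because the members of $\mathcal{P}$ are pairwise vertex-disjoint, each vertex of the finite set $F$ lies on at most one of them. Hence at most $|F|$ members meet $F$.

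\emph{Starting too early on $R$.} The initial segment $Rr$ of $R$ up to and including $r$ is finite. The initial vertices of distinct members of $\mathcal{P}$ are distinct, again by disjointness. So at most $|V(Rr)|$ members have their initial vertex on $Rr$, that is, not strictly after $r$.

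\emph{Ending too early on $S$.} By the same reasoning, at most $|V(Ss)|$ members have their terminal vertex on the initial segment $Ss$ up to and including $s$.

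Since $\mathcal{P}$ is infinite, some member $P$ survives all three exclusions. This $P$ is disjoint from $F$, starts strictly after $r$ on $R$, ends strictly after $s$ on $S$, and is still clean, so it is the required linkage.

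The only point needing care is that the endpoints of a linkage are not interior vertices. Disjointness of the family therefore has to be applied to whole dipaths, endpoints included, and not only to their interiors. The definition of $A(\cR)$ asks for pairwise vertex-disjoint dipaths, so this is available. In fact one could simply apply the first count to the enlarged finite set $F\cup V(Rr)\cup V(Ss)$. No step here is a genuine obstacle.
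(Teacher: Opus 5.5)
Your proof is correct and is essentially the paper's argument. The paper forms the finite set $F'=F\cup V(R_{\le r})\cup V(S_{\le s})$ and notes that only finitely many of the pairwise vertex-disjoint clean linkages meet $F'$, which is exactly the enlarged-set variant you mention at the end; your three separate counts unpack that step and need only that the endpoints avoid the initial segments.
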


\begin{proof}
Let $R_{\le r}$ and $S_{\le s}$ denote the initial segments of $R$ and $S$ ending at $r$ and $s$, and put
\[
F'=F\cup V(R_{\le r})\cup V(S_{\le s}).
\]
The set $F'$ is finite. Since $RS\in E(A)$, there is an infinite pairwise vertex-disjoint family of clean $R$--$S$ linkages. Only finitely many members of this family meet $F'$, so one of them is disjoint from $F'$. Its endpoints therefore lie strictly after $r$ on $R$ and $s$ on $S$, as required.
\end{proof}

\begin{lemma}\label{lem:lifting}
Let $\cR$ be a finite pairwise disjoint family of carrier rays in a digraph $D$, and let $A=A(\cR)$. Suppose that there is a placement of $r\geq 2$ labelled tokens and at least one hole on $A$ for which there exists a finite closed directed schedule whose ordered event graph contains
\[
z_1\leftrightarrow\cdots\leftrightarrow z_r.
\]
Then $D$ contains a subdivision of $H_r$. Moreover, if all rays in $\mathcal R$ belong to the same end $\omega$, then the subdivision may be chosen so that all its branch rays belong to $\omega$.
\end{lemma}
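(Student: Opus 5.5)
We will build the subdivision of $H_r$ by repeating the closed schedule infinitely often and realising each token trajectory in $D$ as a ray. Write the schedule as $\sigma=(s_1,\dots,s_N)$ and let $\sigma^\infty$ be its periodic repetition. Each slide $x:u\to v$ in $\sigma^\infty$ is an arc $uv\in E(A)$. We realise it by a fresh clean $u$--$v$ linkage in $D$, supplied by Lemma~\ref{lem:fresh}. The lemma is applied with $F$ equal to everything constructed so far, and with the starting point on $u$ chosen strictly beyond the current \emph{frontier} (the last used vertex) of carrier $u$, and likewise on $v$. The labelled token $x$ then defines a branch path $L_x$ in $D$. It runs along the carrier it currently occupies, up to the start of the linkage, traverses the linkage, and continues along the new carrier. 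Because every carrier is used by at most one token at a time, and each carrier is only ever traversed forwards beyond its frontier, the resulting paths $L_{z_1},\dots,L_{z_r}$ are pairwise disjoint. They are also infinite rays, since every period of $\sigma$ pushes all frontiers further out. The initial placement puts token $z_i$ on some carrier $R_i$; the holes are carriers temporarily unused.

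Rungs come from the recorded events. For a direct event $x\xRightarrow{1}y$ with $x$ on $u$, $y$ on $v$ and $uv\in E(A)$, we take one further fresh clean $u$--$v$ linkage. Its start on $u$ is beyond the frontier, on the segment currently traversed by $L_x$. Its end on $v$ is beyond the frontier, on the part of $v$ that $L_y$ will still traverse. This gives an $L_x$--$L_y$ dipath internally disjoint from everything else. For a handover event $x\xRightarrow{2}y$ on carrier $c$, the segment of $c$ between the point where $x$ leaves and the point where $y$ arrives is traversed by neither token. If we truncate the tokens' ``use'' of $c$ correctly, that segment serves as the directed rung from $L_x$ to $L_y$. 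To fix this, we reserve that segment explicitly. $L_x$ leaves $c$ at its linkage start $a$; $y$ arrives at $b$ strictly beyond a chosen point $a'$; and the rung is $c[a,a']$ with $a'=b$, so that $L_y$ continues from $b$. Concretely, we take the rung to be the subpath of $c$ from the exit vertex of $x$ to the entry vertex of $y$, which is automatically oriented from $L_x$ to $L_y$.

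To obtain the hexagonal pattern, we note that $H_r$ requires, for each $i$, infinitely many alternating rungs $z_i\to z_{i+1}$ and $z_{i+1}\to z_i$, arranged in the prescribed cyclic order along the rails. Moreover, rungs at different levels $i$ must not cross in the height order. Since each period of $\sigma$ witnesses all $2(r-1)$ ordered events, we select in period $4k+j$ only the events needed for row $j \bmod 4$ of $H_r$. Using one event per period, or a bounded number of periods per grid row, ensures that the rungs appear on each $L_{z_i}$ in the order required by $H_r$ up to subdivision. Each rail is a ray, and the events are laid out sequentially in time and hence in position along the rays. The order of attachment points along $L_{z_i}$ therefore matches the order of the rungs in $H_r$ after deleting an initial segment; extra rail segments only subdivide. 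I expect the main obstacle to be exactly this bookkeeping: a rung must attach to $L_x$ and $L_y$ at positions consistent with time. For a direct event, the endpoint on $v$ must lie on a portion of $v$ that $L_y$ actually traverses before leaving $v$. We therefore choose the end of the linkage first and only afterwards extend $L_y$ past it, invoking Lemma~\ref{lem:fresh} with $F$ containing the rung.

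Finally, for the end statement, every $L_{z_i}$ has a tail contained in a concatenation where it shares infinitely many disjoint connections with carrier rays of $\omega$. More simply, the grid itself provides infinitely many disjoint rungs in both directions between consecutive rails. So all $L_{z_i}$ are equivalent to one another. Moreover, $L_{z_i}$ meets some carrier $R\in\cR$ in infinitely many disjoint segments traversed forwards, which gives infinitely many disjoint paths both ways between $L_{z_i}$ and $R$. Hence $L_{z_i}\sim R\in\omega$.
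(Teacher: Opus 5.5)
Your proposal is correct and follows essentially the same route as the paper: slides and direct events are realised by fresh clean linkages beyond the carrier frontiers via Lemma~\ref{lem:fresh}, and a handover $x\xRightarrow{2}y$ uses the carrier segment from the exit vertex of $x$ to the entry vertex of $y$ as its rung. The four matchings of $H_r$ are laid out in successive time blocks (your one-phase-per-period selection is a clean way to realise the paper's four-phase word), and end membership follows because each branch ray meets a carrier infinitely often. Two small details need tightening: a token idle in a period is not automatically extended (though in your scheme every token is in a selected event at least every four periods), and the boundary rails need room for the degree-two vertices of $H_r$; the paper handles both by extending every unmatched trajectory to a fresh degree-two marker after each phase.
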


\begin{proof}
By repeating the schedule finitely many times and marking suitable occurrences of the required events, we may arrange the required events in the following four phases:
\begin{equation}\label{eq:four-phases}
\begin{aligned}
(1)&\quad z_i\Rightarrow z_{i+1} &&(i\text{ odd}),\\
(2)&\quad z_i\Rightarrow z_{i+1} &&(i\text{ even}),\\
(3)&\quad z_{i+1}\Rightarrow z_i &&(i\text{ odd}),\\
(4)&\quad z_{i+1}\Rightarrow z_i &&(i\text{ even}).
\end{aligned}
\end{equation}
The pairs in each phase form a matching. After the final marked event of phase (4), complete the current copy of the closed schedule. Thus every labelled token is returned to its initial carrier. We repeat this four-phase word indefinitely.

We realise the resulting infinite schedule recursively in $D$. For each carrier $R$, let its \emph{frontier} be the last vertex used so far by a branch segment, a switching linkage, or a rung. At a finite stage, let $F$ consist of all vertices already used, together with the initial carrier segments ending at the current frontiers. Since only finitely many operations have been performed up to this stage and each operation uses only finitely many vertices, the set of previously used vertices is finite. Moreover, $R$ is finite and every initial carrier segment ending at a current frontier is finite. Hence $F$ is finite.

Whenever the schedule uses an auxiliary arc $RS$, apply Lemma~\ref{lem:fresh} to choose a fresh clean $R$--$S$ linkage disjoint from $F$ and with both endpoints beyond the current frontiers. There are three types of operations. See Figure ~\ref{fig:lifting-operations}.

\begin{enumerate}[label=(L\arabic*),leftmargin=3.2em]
\item Suppose that a labelled token slides from $R$ to the vacant carrier $S$. Choose a fresh clean $R$--$S$ linkage. Extend the labelled trajectory along $R$ to the initial vertex of this linkage, traverse the linkage, and end at its terminal vertex on $S$. Update the frontiers of $R$ and $S$.

\item Suppose that a direct event $x\xRightarrow{1}y$ occurs on occupied carriers $R$ and $S$. Choose a fresh clean $R$--$S$ linkage. Extend the trajectories of $x$ and $y$ along their current carriers to the two endpoints of the linkage, and mark the linkage as a rung. The abstract placement is unchanged.

\item Suppose that a handover event $x\xRightarrow{2}y$ on a carrier $R$. The schedule first moves $x$ off $R$ and then moves $y$ onto $R$. Realise these two slides successively as in (L1). Let $a$ be the point at which $x$ leaves $R$, and let $b$ be the point at which $y$ enters $R$. When the second switching linkage is chosen, choose its endpoint $b$ on $R$ beyond the current frontier. In particular, $b$ lies strictly after $a$. The directed segment $aRb$ is then a rung from the branch ray of $x$ to the branch ray of $y$. Mark this segment as used and move the frontier of $R$ past $b$.
\end{enumerate}

For (L1) and (L2), each new linkage is chosen disjoint from all previously used vertices, with its endpoints beyond the current frontiers. Thus in (L1) the extended trajectory meets no earlier labelled trajectory, while in (L2) the interior of the new rung meets neither a labelled trajectory nor an earlier rung. In (L3), the carrier $R$ is vacant between the departure point $a$ of $x$ and the arrival point $b$ of $y$. Moreover, $b$ is chosen beyond the frontier reached after $x$ leaves $R$, so the interior of $aRb$ contains no previously used vertex. Hence it meets neither a labelled trajectory nor an earlier rung. It follows inductively that the labelled trajectories are pairwise disjoint and that the interiors of the marked rungs are disjoint from all labelled trajectories and from one another.

After each phase in ~\eqref{eq:four-phases}, extend every trajectory not incident with that phase's matching along its current carrier to a new vertex beyond the current frontier, and declare this vertex to be a degree-two marker. Since the pairs in each phase form a matching, every labelled trajectory receives exactly one distinguished vertex in that phase: either a rung endpoint or a degree-two marker. Hence, after every phase, each labelled trajectory extends strictly beyond its previous frontier. Since the four-phase word is repeated indefinitely, every labelled trajectory contains infinitely many vertices and is therefore a directed ray.

At the end of each period, every labelled token returns to its initial carrier. Fix a labelled branch ray $B$, and let $R$ be its initial carrier. Since the construction moves beyond the current frontiers after every phase, there are vertices $u_1,u_2,u_3,\ldots\in V(B)\cap V(R)$ that occur successively farther out on both $B$ and $R$.
For $j\geq 1$, the segments $R[u_{2j-1},u_{2j}]$ form an infinite family of pairwise vertex-disjoint $R$--$B$ dipaths. Hence $R\leq B$. Similarly, the segments $B[u_{2j},u_{2j+1}]$ form an infinite family of pairwise vertex-disjoint $B$--$R$ dipaths, and hence $B\leq R$. Therefore $B\sim R$. Thus, if the carriers belong to a common end $\omega$, every labelled branch ray belongs to $\omega$.

Along each labelled trajectory, the rung endpoints and these dummy markers then occur in the four residue-class order used in the definition of $H_r$. The unmarked switching linkages and carrier segments form the longitudinal parts of the subdivided rails and may be suppressed as degree-two vertices. The marked rungs together with the $r$ labelled branch rays therefore form a subdivision of $H_r$.
\end{proof}

\begin{figure}[H]
\centering
\includegraphics[width=0.92\textwidth]{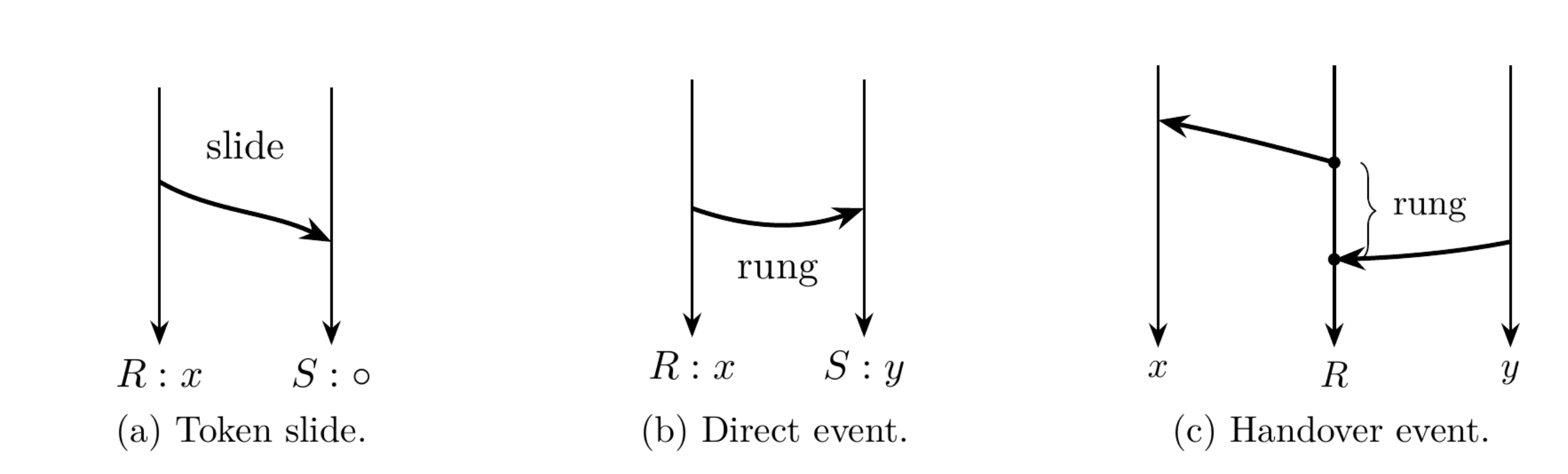}
\caption{The three operations in the proof of Lemma~\ref{lem:lifting}.}
\label{fig:lifting-operations}
\end{figure}

\begin{remark}
The lifting argument in Lemma~\ref{lem:lifting} is not specific to the four-phase pattern defining $H_r$. Let \(\mathcal{M}_0,\mathcal{M}_1,\ldots,\mathcal{M}_{p-1}\) be a fixed finite cyclic sequence of directed matchings on the same label set. Suppose that some finite repetition of the closed directed labelled token schedule contains witnesses for all ordered events prescribed by these matchings, with the matchings occurring in the stated cyclic order. Since each $\mathcal{M}_i$ is a matching, the witnesses within a fixed phase may be taken in any order.

Repeating this finite word and applying the same frontier construction produces fresh rungs in the prescribed cyclic order. After inserting degree-two markers on the rails not incident with each matching, the resulting branch rays and rungs form a subdivision of the periodic directed strip determined by \(\mathcal{M}_0,\ldots,\mathcal{M}_{p-1}\). The grid $H_r$ is the special case given by the four matchings in (\ref{eq:four-phases}).
\end{remark}

The preceding results reduce the upper-bound problem to a finite scheduling problem on the auxiliary ray digraph. Theorem~\ref{thm:directed-augmentation} provides the required closed directed schedule, and Lemma~\ref{lem:lifting} lifts such a schedule to a subdivision of the directed hexagonal grid in the original digraph. We now use these results to establish the upper bound for $k(n)$.

\begin{theorem}\label{thm:upper}
For $n\ge4$,
\[
k(n)\le\left\lfloor\frac{3n}{2}\right\rfloor-1.
\]
In addition, $k(3)\le4$. Moreover, whenever an end $\omega$ satisfies the corresponding in-degree bound, the resulting subdivision may be chosen so that all its branch rays belong to $\omega$.
\end{theorem}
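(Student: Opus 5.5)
The plan is to combine the finite scheduling result with the lifting lemma. Let $\omega$ be an end of a digraph $D$ with in-degree at least $d$. Here $d=\lfloor 3n/2\rfloor-1$ for $n\ge4$, and $d=4$ for $n=3$. By definition of in-degree, and since Hamann and Heuer showed the maximum is attained, $\omega$ contains $d$ pairwise vertex-disjoint rays. Choose exactly $d$ of them and call this family $\cR$. All rays of $\cR$ lie in $\omega$, so, as observed in~\cite{HamannHeuerGrids}, the auxiliary ray digraph $A=A(\cR)$ is strong. It has $d\ge4$ vertices, since $d=5$ already for $n=4$, $d$ grows with $n$, and $d=4$ for $n=3$.

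Next I apply Theorem~\ref{thm:directed-augmentation} to $A$. This gives a placement with $h=\lfloor (d-1)/3\rfloor$ holes and $r=d-h$ labelled tokens, together with a finite closed directed schedule whose ordered event graph contains the bidirected Hamilton path $z_1\leftrightarrow\cdots\leftrightarrow z_r$. Since $d\ge4$, we have $h\ge1$, so the hole hypothesis of Lemma~\ref{lem:lifting} is met. The counting is the only computation:
\begin{itemize}
\item For $n=2m$ we get $d=3m-1$ and $h=\lfloor(3m-2)/3\rfloor=m-1$, hence $r=2m=n$.
\item For $n=2m+1$ we get $d=3m$ and $h=\lfloor(3m-1)/3\rfloor=m-1$, hence $r=2m+1=n$.
\item For $n=3$ we get $d=4$, $h=1$ and $r=3$.
\end{itemize}
In all cases $r=n\ge2$.

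Lemma~\ref{lem:lifting} then yields a subdivision of $H_r=H_n$ in $D$, and its branch rays belong to $\omega$ because all carriers lie in $\omega$. This gives both the bound $k(n)\le d$ and the ``moreover'' clause. If one prefers to allow $r\ge n$ rather than exactly $n$, note that the first $n$ rails of $H_r$ together with the rungs among them form exactly $H_n$. This holds because the rung pattern between rails $i$ and $i+1$ depends only on $i$.

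I expect no real obstacle here, since the heavy lifting is in Theorem~\ref{thm:directed-augmentation} and Lemma~\ref{lem:lifting}. The points to verify carefully are the following:
\begin{itemize}
\item The hypothesis $|V(A)|\ge4$ of Theorem~\ref{thm:directed-augmentation} holds in every case, including $n=3$. This is exactly why $k(3)$ needs $4$ rays rather than $\lfloor 9/2\rfloor-1=3$.
\item At least one hole exists, as required by Lemma~\ref{lem:placement-scc}.
\item The floor arithmetic above gives $r\ge n$.
\end{itemize}
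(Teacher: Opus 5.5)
Your proposal is correct and is essentially the paper's own proof. You take $q=\lfloor 3n/2\rfloor-1$ (or $q=4$ for $n=3$) disjoint rays of $\omega$, use strong connectivity of $A(\mathcal{R})$, apply Theorem~\ref{thm:directed-augmentation} with the same count $h=m-1$ giving $r=n$, and lift via Lemma~\ref{lem:lifting}, which also keeps the branch rays in $\omega$; your explicit checks that $|V(A)|\ge4$ and $h\ge1$ are left implicit in the paper but are precisely the hypotheses those results need.
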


\begin{proof}
Choose $q$ pairwise disjoint rays in the end and form the auxiliary ray digraph $A$ on these $q$ rays. Since they belong to the same end, $A$ is strong. Theorem~\ref{thm:directed-augmentation} gives a closed schedule, and Lemma~\ref{lem:lifting} turns that schedule into a directed hexagonal grid.

For $n=3$, take $q=4$. Then $h=1$, $r=q-h=3$, so Lemma~\ref{lem:lifting} gives a subdivision of $H_3$.

Now let $n\ge4$. If $n=2m$, then $q=3m-1$, $h=m-1$, and hence $r=2m=n$. If $n=2m+1$, choose $q=3m$. In this case $h=m-1$, so $r=2m+1=n$. Thus the stated number of disjoint rays forces $H_n$ in both parity cases.

Since in each case the chosen carrier rays all belong to the original end $\omega$, Lemma~\ref{lem:lifting} shows that all branch rays of the resulting subdivision may also be chosen in $\omega$.
\end{proof}

\section{Sharp obstructions and proof of the main theorem}

\begin{proposition}\label{prop:small12}
We have $k(1)=1$ and $k(2)=2$.
\end{proposition}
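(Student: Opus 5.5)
Both equalities split into an upper bound and a lower bound. Since $k(n)$ is by definition a positive integer, the lower bound $k(1)\ge1$ is automatic. So for $n=1$ it suffices to prove that an end of in-degree at least $1$ forces a subdivision of $H_1$. Now $H_1$ has no rung arcs at all; it is just the single ray $L_1$. An end of in-degree at least $1$ contains a ray $R$, and $R$ itself is a subdivision of $H_1$ whose unique branch ray lies in $\omega$. This gives $k(1)=1$.

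For $n=2$ I first show the upper bound $k(2)\le2$. Take two disjoint rays $R,S$ in the end $\omega$. As recorded before Lemma~\ref{lem:fresh}, the auxiliary ray digraph $A(\{R,S\})$ is strong, so it contains both arcs $RS$ and $SR$. Here I would not use Theorem~\ref{thm:directed-augmentation}, since it needs at least four vertices. Instead I construct the subdivision directly, in the spirit of Lemma~\ref{lem:lifting} with no token movement. Both tokens stay on their own carriers, and the schedule consists only of direct events, alternating the four phases of~\eqref{eq:four-phases}: phases (1) and (3) use $z_1\Rightarrow z_2$ and $z_2\Rightarrow z_1$ respectively, and phases (2) and (4) are empty for width $2$ and receive only degree-two markers. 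In each step I use Lemma~\ref{lem:fresh} to choose a fresh clean linkage beyond the current frontiers and disjoint from everything used so far. The rungs then alternate $R\to S$ and $S\to R$ at increasing positions, and their interiors avoid both rays and each other because the linkages are clean and fresh. Suppressing degree-two vertices gives a subdivision of $H_2$ whose branch rays are $R$ and $S$, so they lie in $\omega$.

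For the lower bound $k(2)\ge2$, I need a digraph with an end of in-degree $1$ that contains no subdivision of $H_2$. A single ray $R=v_0v_1\cdots$ works. Its only end has in-degree $1$, because any two rays in $R$ are tails of $R$ and hence intersect. On the other hand, $H_2$ contains two disjoint rays, and a subdivision would map them to two vertex-disjoint rays in $R$, which is impossible. Hence $k(2)=2$.

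The only step that needs any care is the $n=2$ upper bound, where the frontier bookkeeping must guarantee that the rungs appear in the correct alternating order and have disjoint interiors. This is exactly what the proof of Lemma~\ref{lem:lifting} already handles, so here I will simply note that its argument applies verbatim with a static placement and no holes.
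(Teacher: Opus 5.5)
Your proof is correct and follows essentially the same route as the paper. $H_1$ is a single ray. The bound $k(2)\le 2$ comes from choosing fresh clean linkages alternately in the two directions via Lemma~\ref{lem:fresh]}; framing this as the lifting argument with a static placement and only direct events is the same content, formalised. A single directed ray witnesses $k(2)\ge 2$.
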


\begin{proof}
Since $H_1$ is a ray, every end contains an $H_1$-subdivision. Hence $k(1)=1$.
For $n=2$, let $\omega$ be an end of in-degree at least two. Choose two disjoint rays $R,S\in\omega$. The auxiliary digraph $A(\{R,S\})$ is strong, so both $RS$ and $SR$ are arcs of $A(\{R,S\})$. By Lemma~\ref{lem:fresh}, we can choose clean linkages alternately in the two directions and successively farther out on $R$ and $S$. Suitable tails of $R$ and $S$ together with these linkages form a subdivision of $H_2$. Hence $k(2)\le2$. For the reverse inequality, let $D$ be a single directed ray. Its unique end has in-degree one, while $D$ contains no subdivision of $H_2$, since such a subdivision has two disjoint branch rays. Hence $k(2)\ge2$. Therefore $k(2)=2$.
\end{proof}

The case of width three is handled by the circular-grid example from~\cite{HamannHeuerGrids}. We give an explicit
description for later use. Let $\Gamma_3$ be the digraph with vertices $v_0,v_1,\ldots$ and arcs
\begin{equation}\label{eq:Gamma3}
v_pv_{p+1}\quad(p\ge0),
\qquad
v_pv_{p+5}\quad(p\ge0\text{ and }p\text{ is even}).
\end{equation}
Taking the arc $v_pv_{p+5}$ when $p$ is even and the arc $v_pv_{p+1}$ when $p$ is odd gives the three rays
\[
R_1=v_0v_5v_6v_{11}\cdots,
\qquad
R_2=v_1v_2v_7v_8\cdots,
\qquad
R_3=v_3v_4v_9v_{10}\cdots.
\]

\begin{lemma}\label{lem:Gamma3}
The digraph $\Gamma_3$ has an end of in-degree three, but contains no subdivision of $H_3$.
\end{lemma}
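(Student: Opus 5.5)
The plan is to exploit the fact that every arc of $\Gamma_3$ goes forward, from $v_p$ to $v_{p+1}$ or $v_{p+5}$, and to count the arcs crossing the cuts between $\{v_0,\dots,v_{t-1}\}$ and $\{v_t,v_{t+1},\dots\}$.

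\emph{The end and its in-degree.} Every arc goes forward, so there are no anti-rays. From any $v_p$ there is a dipath to every $v_q$ with $q>p$, along the arcs $v_iv_{i+1}$. Given two rays $Q,R$, I pick vertices alternately on $Q$ and $R$ at rapidly increasing indices and join consecutive pairs by spine segments lying in pairwise disjoint index windows. This gives infinitely many disjoint $Q$--$R$ and $R$--$Q$ dipaths, so $\Gamma_3$ has a unique end $\omega$. The rays $R_1,R_2,R_3$ show that its in-degree is at least three.

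For the upper bound, fix an even $t\ge 5$. The arcs from $\{v_p:p<t\}$ to $\{v_p:p\ge t\}$ are $v_{t-1}v_t$, $v_{t-4}v_{t+1}$ and $v_{t-2}v_{t+3}$, since $q\in\{t-5,\dots,t-1\}$ must be even. This cut has exactly three arcs. Every ray eventually passes every index, and disjoint rays use distinct crossing arcs, so the in-degree is exactly three.

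\emph{No $H_3$.} Suppose $\Gamma_3$ contains a subdivision with branch rays $B_1,B_2,B_3$. Beyond the starting points of the $B_i$, each even cut is saturated: its three arcs are used by the three rays, one each. I would first show that from some index on, the $B_i$ are tails of $R_1,R_2,R_3$, with every vertex covered. Suppose a vertex $v_p$ far out is missed by all three rays. Take an even cut $t$ with $v_p$ just before it, for instance $t=p+1$ or $t=p+2$. The three crossing arcs start at $v_{t-1},v_{t-4},v_{t-2}$, so the rays must pass through all of these. Propagating this saturation cut by cut should force each ray to follow the $R_1/R_2/R_3$ pattern: at even $q$ take the jump $v_qv_{q+5}$, and at odd $q$ take $v_qv_{q+1}$. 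The arc $v_qv_{q+1}$ with $q$ even is then used by no ray. I expect this forcing step, pinning down the rays' local behaviour from saturation of every even cut, to be the main obstacle. I would do it by induction over consecutive even cuts, tracking which three vertices each cut's arcs occupy.

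Once the rays are fixed, every rung path has to use arcs off the rays. Its interior avoids the rays, but every vertex is on a ray. So each rung is a single arc, and the only unused arcs are $v_qv_{q+1}$ with $q$ even. Reading off the pattern, these go $R_1\to R_2$ at $q\equiv 0\pmod 6$, $R_2\to R_3$ at $q\equiv2\pmod 6$, and $R_3\to R_1$ at $q\equiv 4\pmod 6$. Hence between any two branch rays, all rungs far out point in one direction only. But $H_3$ contains rungs $x^1_jx^2_j$ and $x^2_jx^1_j$ in both directions between its first two rails, and this persists in every tail. This is a contradiction, so $\Gamma_3$ contains no subdivision of $H_3$.
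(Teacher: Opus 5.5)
Your proof is correct. It differs from the paper's mainly in the forcing step.

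\textbf{The paper's route.} The paper uses the three-vertex levels $L_j$. These only show that the three branch rays cover every sufficiently late vertex. To pin down which arcs the rays use, the paper then needs three further steps:
\begin{enumerate}
\item the in-degree balance $(1-x_{q-1})+x_{q-5}=1$ at each late vertex;
\item the resulting recurrence $x_p=x_{p-4}$;
\item a count of cofinal orbits for the four eventual residue patterns, which leaves only the all-jump pattern.
\end{enumerate}

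\textbf{Your route.} Your three-arc even cuts make the step you flagged as the main obstacle immediate, and no induction over cuts is needed. Take $t$ even and beyond all starting points. Saturation says that $v_{t-1}v_t$, $v_{t-4}v_{t+1}$ and $v_{t-2}v_{t+3}$ all lie on branch rays. With $t=q+2$, this shows that every late jump $v_qv_{q+5}$ with $q$ even is a ray arc. Since $v_q$ lies on only one ray and has exactly one out-arc there, the step $v_qv_{q+1}$ is unused. The successor map $q\mapsto q+5$ for $q$ even and $q\mapsto q+1$ for $q$ odd has exactly the three orbits $R_1,R_2,R_3$. Disjoint rays cannot share a tail, so the branch rays are tails of distinct $R_i$.

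\textbf{What each approach buys.} Your arc cuts give coverage and the exact arc set in one observation. The paper's vertex levels give only coverage, which is why it needs the recurrence and the orbit table. Your spine argument for the end is also slightly stronger than the paper's brief remark about periodic linkages. It shows that every ray of $\Gamma_3$ is equivalent to every other, and there are no anti-rays, so the end is unique.

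\textbf{One point to make explicit.} In the rung step, say that a rung starting at a late branch vertex has all its vertices late, because indices strictly increase along dipaths. With that stated, late rungs are single arcs $v_qv_{q+1}$ with $q$ even, oriented cyclically $R_1\to R_2\to R_3\to R_1$, exactly as you conclude.
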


\begin{proof}
For $q\ge0$, write
\[
\begin{aligned}
v_{6q}&=r^1_{2q+1}, & v_{6q+1}&=r^2_{2q+1}, & v_{6q+2}&=r^2_{2q+2},\\
v_{6q+3}&=r^3_{2q+1}, & v_{6q+4}&=r^3_{2q+2}, & v_{6q+5}&=r^1_{2q+2},
\end{aligned}
\]
and set
\[
L_j=\{r^1_j,r^2_j,r^3_j\}.
\]
Each $L_j$ contains one vertex of each of $R_1,R_2,R_3$ and separates a finite initial segment of $\Gamma_3$
from all sufficiently late vertices. Hence every directed ray meets every sufficiently late $L_j$.

Let $Q_1,Q_2,Q_3$ be three pairwise disjoint rays. For every sufficiently large $j$, each $Q_i$ meets $L_j$, and the three intersections are distinct. Since $|L_j|=3$, $Q_1,Q_2,Q_3$ contains every vertex of every sufficiently late vertex of $L_j$, and therefore every sufficiently late vertex of $\Gamma_3$.

For sufficiently large $p$, let $x_p=1$ if the selected arc of $Q_1\cup Q_2\cup Q_3$ leaving $v_p$ is $v_pv_{p+5}$,
and let $x_p=0$ if it is $v_pv_{p+1}$. Set $x_p=0$ for odd $p$. Every sufficiently late vertex has selected in-degree one. At $v_q$ this gives
\[
(1-x_{q-1})+x_{q-5}=1,
\]
and hence
\begin{equation}
x_p=x_{p-4}
\end{equation}
for all sufficiently large $p$. Thus the tail pattern is determined by the two even residue classes modulo four. The corresponding successor map has the following numbers of cofinal orbits:
\[
\begin{array}{c|cccc}
(x_{0\pmod4},x_{2\pmod4})&(0,0)&(1,0)&(0,1)&(1,1)\\\hline
\text{number of cofinal orbits}&1&2&2&3.
\end{array}
\]
Indeed, tracing the successor map $p\mapsto p+1$ or $p\mapsto p+5$ in the four possible eventual residue patterns gives exactly the orbit counts displayed above. Since $Q_1,Q_2,Q_3$ are three cofinal orbits, the last case must occur. Hence $x_p=1$ for every sufficiently large even $p$. Up to permutation, the tails of $Q_1,Q_2,Q_3$ are the tails of $R_1,R_2,R_3$.

Suppose that $\Gamma_3$ contains a subdivision of $H_3$, and let $Q_1,Q_2,Q_3$ be its branch rays. By the preceding paragraph, their tails are the tails of $R_1,R_2,R_3$ and together contain every sufficiently late vertex of $\Gamma_3$. The vertex index strictly increases along every dipath in $\Gamma_3$. Choose a rung of a hypothetical $H_3$ subdivision whose endpoints lie beyond the finite initial part above. Every internal vertex of this rung is then also sufficiently late. Since every sufficiently late vertex already lies on one of the three branch rays and the interior of a rung is disjoint from all branch rays, every sufficiently late rung is a single arc.

The only late arcs between distinct rays have the cyclic orientations
\[
R_1\to R_2,
\qquad R_2\to R_3,
\qquad R_3\to R_1.
\]
Thus no pair of branch rays has sufficiently late rungs in both directions. This contradicts the definition of $H_3$. Therefore $\Gamma_3$ contains no subdivision of $H_3$.

Finally, for every ordered pair $R_i,R_j$, the periodic tail of $\Gamma_3$ contains directed $R_i$--$R_j$ linkages in infinitely many pairwise disjoint successive blocks, so $R_1,R_2,R_3$ lie in one end. Since every sufficiently late ray meets every sufficiently late three-vertex level $L_j$, and $|L_j|=3$, four pairwise disjoint rays are impossible. The end has in-degree exactly three.
\end{proof}

By Lemma~\ref{lem:Gamma3} and Theorem~\ref{thm:upper},
\begin{equation}\label{eq:k3}
k(3)=4.
\end{equation}

\medskip
\noindent\textbf{Construction 1}
For positive integers $a\ge b\ge c\ge1$, let $T(a,b,c)$ be the tree obtained by identifying one endpoint of three paths of lengths $a,b,c$ to a common centre $o$. Consider the Cartesian product
\[
Y(a,b,c)=T(a,b,c)\mathbin{\square}\N,
\]
where the second coordinate will be regarded as height.

Direct every vertical edge towards increasing height. In the horizontal layer at height $t$, direct each arm away from $o$ when $t$ is even and towards $o$ when $t$ is odd. We write $\widetilde{Y}(a,b,c)$ for the resulting digraph. See Figure~\ref{fig:three-arm}(a). For $t\in\N$, let $C_t=\{(w,t)(w,t+1):w\in V(T(a,b,c))\}$ be the vertical cut after layer $t$.

\begin{lemma}\label{lem:no-passing}
Let $\widetilde{Y}(a,b,c)$ and $C_t$ be as defined in Construction~1. Every directed ray in $\widetilde{Y}(a,b,c)$ uses exactly one arc of $C_t$ for all sufficiently large $t$. Hence pairwise disjoint directed rays occupy distinct tracks on every sufficiently large cut.

Fix an arm $A=oA_1\cdots A_d$ and a directed ray $S$ belonging to a pairwise vertex-disjoint family of directed rays. During any interval of cuts on which $S$ stays on tracks of $A\cup\{o\}$, no dipath vertex-disjoint from $S$ can cross from the $o$-side of $S$ to its outer side, or conversely.
\end{lemma}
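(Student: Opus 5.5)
We propose to prove the two parts of Lemma~\ref{lem:no-passing} separately. Both rest on the observation that the height coordinate never decreases along an arc of $\widetilde{Y}(a,b,c)$: vertical arcs raise it by one and horizontal arcs keep it fixed.

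\emph{First part.} Let $S$ be a directed ray. Each horizontal layer is a finite oriented tree. Within a layer the arms are consistently oriented (all away from $o$ or all towards $o$), so the layer contains no directed cycle. Hence $S$ spends only finitely many consecutive vertices in any layer and then must take a vertical arc. Consequently the height along $S$ is nondecreasing and unbounded, and every vertical arc raises it by exactly one. If $t_0$ is the height of the initial vertex of $S$, then for every $t\ge t_0$ the ray $S$ leaves layer $t$ exactly once, and it can do so only by an arc of $C_t$. Thus $S$ uses exactly one arc of $C_t$ for every $t\ge t_0$. If the rays in a family are pairwise vertex-disjoint, their arcs of $C_t$ have distinct tails $(w,t)$, so they occupy distinct tracks $w$.

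\emph{Second part.} Suppose that on cuts $t_1\le t\le t_2$ the ray $S$ uses the track $s_t\in A\cup\{o\}$. The first step is to note that within layer $t$ the ray $S$ moves only along $A\cup\{o\}$, from its entry vertex $(s_{t-1},t)$ to its exit vertex $(s_t,t)$. It therefore occupies the whole segment of $A\cup\{o\}$ between $s_{t-1}$ and $s_t$ at height $t$. Call a vertex $(A_j,t)$ outer if $A_j$ lies strictly beyond this occupied segment in the direction away from $o$, and inner if it lies strictly on the $o$-side (this includes the centre and the other two arms). Let $P$ be a dipath disjoint from $S$. We check that no arc of $P$ goes from an inner vertex to an outer vertex, or the reverse.

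\begin{itemize}
\item A horizontal arc inside layer $t$ would have to pass through the occupied segment, which is impossible because $P$ avoids $S$.
\item A vertical arc $(w,t)\to(w,t+1)$ keeps the track $w$. In layer $t$ the point $w$ is separated from the boundary by the segment between $s_{t-1}$ and $s_t$. In layer $t+1$ it is separated by the segment between $s_t$ and $s_{t+1}$. These two segments share the point $s_t$. Since $P$ avoids $s_t$ at both heights, $w$ lies on the same side of $s_t$ at both heights, and so on the same side of $S$.
\end{itemize}

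The main point needing care is exactly this side bookkeeping at the transition between layers. The sides must be defined relative to $s_t$ (the cut position) so that they are consistent from one layer to the next. Each occupied segment contains $s_t$ at both of its adjacent heights, and that is what makes the definition consistent.
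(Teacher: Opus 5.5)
Your proof is correct. The first part is the paper's argument: height is monotone, each layer is finite, so a ray uses exactly one arc of every late $C_t$, and disjoint rays have distinct tails. The acyclicity of the layers is not needed here, since a ray meets a finite layer in only finitely many vertices anyway.

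For the second part, you and the paper rely on the same blocking mechanism: the horizontal segment of $S$ in a layer is an interval of tracks that a disjoint dipath cannot pass through. You organize it differently.

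\begin{itemize}
\item \textbf{The paper} records the tracks of $P$ and $S$ at the cuts while $P$ stays on $A\cup\{o\}$. It gets a contradiction at the first layer where their radial order reverses, because the two monotone track intervals would intersect. It then treats separately the only ways $P$ can leave $A$ (through track $0$ in an inward layer) or enter it (through track $0$ in an outward layer).
\item \textbf{You} partition, for each layer $t_1<t\le t_2$, the vertices off $S$ into inner and outer parts, with the other two arms automatically inner. You then check invariance arc by arc. The only nontrivial step is the vertical one, and it works because the occupied segments of layers $t$ and $t+1$ both contain $s_t$.
\end{itemize}

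Your route gives a precise definition of the two sides, which the paper leaves informal, for instance when $S$ sits on $o$ or $P$ is on another arm. It also handles excursions of $P$ into the other arms with no extra casework. The paper's formulation is closer to how the lemma is applied in Case~2 of Lemma~\ref{lem:Stein-obstruction}, where entering and leaving $A$ through $o$ is used explicitly. Those facts follow at once from your invariant as well.

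Two small points to tidy up:
\begin{itemize}
\item Say explicitly that the occupied segments are defined only for $t_1<t\le t_2$. For $t=t_1$, the entry track $s_{t_1-1}$ may lie on another arm.
\item Note that $P$ avoids $S$ and heights are monotone. Hence every vertex of $P$ in these layers is inner or outer, and these vertices form a subpath of $P$. Only then does arc-by-arc invariance give the claim.
\end{itemize}
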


\begin{proof}
The height coordinate is non-decreasing along every dipath. Since every finite set of layers contains only finitely many vertices, a directed ray has unbounded height. Hence the ray must use an arc of $C_t$ for every sufficiently large $t$. Once the ray crosses $C_t$, it cannot return to height $t$. Hence it uses exactly one arc of every sufficiently late $C_t$. Two disjoint rays cannot use the same vertical arc.

Number the tracks of $A\cup\{o\}$ by $0,1,\ldots,d$ with $0$ corresponding to $o$. Let $P$ be a dipath disjoint from $S$. While $P$ stays on tracks of $A\cup\{o\}$, record the tracks occupied by $P$ and $S$ immediately below and immediately above each relevant layer. In an even layer, every horizontal dipath on $A$ follows a consecutive interval of tracks in increasing order. In an odd layer, it follows such an interval in decreasing order.

Suppose that the radial order of $P$ and $S$ reverses for the first time across some layer. If that layer is outward, the path that enters on the inner side must leave beyond the path that enters on the outer side. Their two increasing track intervals then intersect. The same argument applies in an inward layer with the directions reversed. Either case contradicts the disjointness of $P$ and $S$.

A dipath can leave $A$ only through track $0$ in an inward layer. If $P$ is on the outer side of $S$ before it leaves $A$, its horizontal interval to track $0$ would contain the track occupied by $S$ at the beginning of that layer. Thus $P$ meets $S$, a contradiction. Hence a dipath disjoint from $S$ can leave $A$ only from the $o$-side of $S$.

Similarly, a dipath can enter $A$ only through track $0$ in an outward layer. If $P$ enters $A$ and ends that layer on the outer side of $S$, its horizontal interval from track $0$ to its final track contains the track occupied by $S$ at the end of that layer. Thus $P$ meets $S$, a contradiction. Hence a dipath that enters $A$ from $o$ cannot pass to the outer side of $S$.

Therefore a dipath disjoint from $S$ cannot change sides relative to $S$, whether it remains in $A$ or leaves and later re-enters $A$.
\end{proof}

\begin{lemma}\label{lem:Stein-obstruction}
For $a\ge b\ge c\ge1$, the digraph $\widetilde{Y}(a,b,c)$ contains no subdivision of $H_{a+b+2}$.
\end{lemma}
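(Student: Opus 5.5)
Suppose, for a contradiction, that $\widetilde{Y}(a,b,c)$ contains a subdivision of $H_{a+b+2}$. Write $Q_1,\dots,Q_{a+b+2}$ for its branch rays, numbered as the rails $L_1,\dots,L_{a+b+2}$ of $H_{a+b+2}$. The tree $T(a,b,c)$ has $a+b+c+1$ vertices. By Lemma~\ref{lem:no-passing}, for every sufficiently large $t$ the $Q_i$ use pairwise distinct arcs of the cut $C_t$, and hence occupy $a+b+2$ distinct tracks.

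The plan is to exploit two features of $H_{a+b+2}$. First, consecutive rails $Q_i,Q_{i+1}$ are joined by rungs in both directions infinitely often. Second, every rung between $Q_i$ and $Q_j$ with $|i-j|\ge 2$ has to go around the intermediate rails. In particular, the rungs of $H_{a+b+2}$ give dipaths that are vertex-disjoint from every branch ray other than their two endpoint rays. We then combine this with the no-passing statement of Lemma~\ref{lem:no-passing}.

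\textbf{Step 1 (pigeonhole on the arms).} There are $a+b+2$ rays and only $a+b+c+1$ tracks. So at each late cut the number of rays on $A_a\cup A_b\cup\{o\}$ is at most $a+b+1$, and at least one ray lies strictly inside the $c$-arm. More precisely, I will argue that infinitely often some arm contains a ray $S$ such that $S$ has other branch rays on its outer side, and also has branch rays that are not adjacent to it in the rail order lying on its $o$-side or on other arms.

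\textbf{Step 2 (tracks eventually stabilise).} Next I will show that, eventually, no branch ray changes sides relative to another branch ray sharing its arm. By Lemma~\ref{lem:no-passing}, a ray $P$ disjoint from $S$ cannot cross $S$ while $S$ remains on $A\cup\{o\}$. So the radial order of rays within one arm is preserved, and a ray can move between arms only after it reaches $o$. Then I will analyse the \emph{outermost} ray region. The rays lying outside $S$ on an arm $A$ can reach rays in other arms only by a dipath through $o$, and such a dipath must pass $S$; by Lemma~\ref{lem:no-passing} this is impossible for a rung disjoint from $S$. Hence the set $O$ of rays on the outer side of $S$ forms a block that is connected to the rest only via rungs incident to $S$. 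Since consecutive rails need rungs, $O\cup\{S\}$ must be a contiguous interval of the rail order with $S$ at one end. Iterating this inside each arm, each arm carries an interval of rails ordered radially. The rails then split into three intervals, one per arm, glued only at $o$.

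\textbf{Step 3 (counting).} The path $L_1\cdots L_{a+b+2}$ of consecutive rail pairs has to cross from one arm's interval to another's. Each such crossing needs rungs in both directions between the innermost rays of two arms. Only the single track at $o$ is available to pass between arms, and a ray sitting on $o$ blocks everything else. Using the alternation of layer orientations, I expect to show that at most two arms can be used, together with $o$, giving at most $a+b+1<a+b+2$ rays. This contradicts Step 1.

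The main obstacle is Step 3: making rigorous that a third arm cannot be integrated into the hexagonal rung pattern. My first attempt will be to show that the ray occupying the centre $o$ (if any) separates the arms, so that the rail order restricted to three arms would require a rail adjacent to three intervals, which is impossible since each rail has at most two neighbours in $H_{a+b+2}$.
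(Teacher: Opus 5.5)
Your proposal has a genuine gap, and it sits where the real difficulty of the lemma lies.

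\textbf{Step 2 is not justified.} You claim that eventually no branch ray changes arms, so that the rails split into three fixed intervals glued at $o$. Lemma~\ref{lem:no-passing} does not give this. It only forbids crossing $S$ \emph{during an interval of cuts on which $S$ stays on $A\cup\{o\}$}. Nothing prevents $S$ from running inward to $o$ in an odd layer and then out along another arm $B$ in a later even layer. Once $S$ has left, the rays that were on its outer side may also leave $A$, and rays may keep migrating between arms indefinitely. So the preserved radial order, the contiguity of $O\cup\{S\}$ in the rail order, and the three-interval decomposition are all unproved.

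\textbf{Step 3 fails even in the static picture.} Step 3 rests on that static picture, and there your proposed mechanism does not work. Suppose the centre is empty. The rail path can pass through all three arms if the middle arm carries a single rail. That rail is its own innermost ray and is adjacent to the innermost rays of the other two arms. No rail then needs three neighbours, so the degree argument yields nothing. The contradiction in that configuration comes only from counting: at most $1+h_B+h_C\le a+b+1$ rays.

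\textbf{The missing idea} is to localise, and then split according to whether the separating ray ever leaves its arm. This handles arm-switching directly.

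\begin{enumerate}
\item At a late cut all three arms are occupied, since otherwise at most $1+a+b$ rays fit.
\item The outermost rays of the three arms are distinct, and at most two of them are end rails. So some arm $A$ has an outermost ray $R$ that is an internal rail.
\item Let $S$ be the next ray inward on $A$, or the centre ray if $R$ is alone on $A$. The centre ray must then exist, again by counting.
\item If $S$ stays on $A\cup\{o\}$ forever, Lemma~\ref{lem:no-passing} traps $R$ on the outer side of $S$. The grid neighbour of $R$ other than $S$ stays on the $o$-side, so late rungs between them are impossible.
\item If $S$ eventually enters another arm $B$ along $oB_1\cdots B_\rho$, then at that moment $R$ is the only ray left on $A$. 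The horizontal segment of $S$ uses the lower endpoints of the vertical arcs at $o,B_1,\dots,B_{\rho-1}$. The cut just above that layer therefore carries at most $2+(h_B-\rho)+h_C\le a+b+1$ branch rays, a contradiction.
\end{enumerate}

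This dichotomy needs no stabilisation claim. Your Step 2 and Step 3 would have to be replaced by it, or by an argument of comparable strength that controls rays moving through $o$.
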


\begin{proof}
Set $s=a+b+2$, and suppose that $\widetilde{Y}(a,b,c)$ contains a subdivision of $H_s$. Let $Q_1,\ldots,Q_s$ be its branch rays, listed in grid order. By Lemma~\ref{lem:no-passing}, we choose a sufficiently large cut $C_t$ so far out that the $Q_i$ use distinct vertical arcs of $C_t$. Every arm contains a track occupied by a branch ray at $C_t$. Indeed, if one arm contained no such track, the centre track and the two remaining arms would contain at most $1+a+b=s-1$ branch rays, a contradiction.

In each arm, choose a branch ray on the outermost occupied track. These three chosen rays are distinct. Since only $Q_1$ and $Q_s$ are end rails, at least one of the three is an internal grid rail. Choose such a ray and call it $R$. Let $A$ be the arm containing $R$. If $A$ contains another branch ray, let $S$ be the branch ray on the second-outermost occupied track of $A$. If $R$ is the only branch ray on $A$, let $S$ be the branch ray on the centre track. Such a centre ray must exist, otherwise $R$ together with the rays on the other two arms would occupy at most $a+b+1=s-1$ tracks. Thus, at the cut $C_t$, the ray $R$ is the only branch ray on the outer side of $S$. The relative positions of R and S are illustrated in Figure~\ref{fig:three-arm}(b).

\emph{Case 1: After $C_t$, the ray $S$ never enters another arm.}
Since $R$ is the unique branch ray on the outer side of $S$ at $C_t$, every branch ray other than $R$ and $S$ lies on the $o$-side of $S$ at $C_t$. By Lemma~\ref{lem:no-passing}, no dipath disjoint from $S$ can cross from the outer side of $S$ to its $o$-side, or conversely. Since $R$ is an internal grid rail, it has two grid neighbours, at most one of them is $S$. Let $Q$ be the other neighbour. Then $Q$ lies on the $o$-side of $S$, so a sufficiently late rung from $R$ to $Q$ would have to cross $S$, a contradiction.

\emph{Case 2: After $C_t$, the ray $S$ first switches from $A$ to another arm $B$.}
Let $h_B$ be the length of $B$. To leave $A$, the ray $S$ reaches $o$ in an inward layer. After possibly waiting on the centre track, it enters $B$ in an outward layer along a segment
\begin{equation}\label{eq:first-switch}
oB_1\cdots B_\rho,
\qquad \rho\ge1.
\end{equation}
Before this switch, $R$ cannot leave $A$ without crossing $S$. If $S$ already lies on the centre track at $C_t$, $R$ is the only branch ray on $A$ by the choice of $S$. Otherwise, consider the inward layer in which $S$ reaches $o$. The horizontal segment of $S$ contains every track between the track occupied by $S$ and $o$. Hence all branch rays on the $o$-side of $S$ must already have left $A$. By Lemma~\ref{lem:no-passing}, no branch ray on the outer side of $S$ can cross $S$. Hence $R$ is the only branch ray left on $A$ when $S$ reaches $o$. While $S$ waits at $o$, no branch ray can enter $A$ through $o$. The same is true in the outward layer in which $S$ follows the segment in ~\eqref{eq:first-switch}, since that segment contains $o$. Thus, $R$ is still the only branch ray on $A$ after this switch.

Consider the cut immediately above the layer containing ~\eqref{eq:first-switch}. The horizontal segment of $S$ uses the lower endpoints of the vertical arcs on
\[
o,B_1,\ldots,B_{\rho-1},
\]
and $S$ itself occupies the track $B_\rho$. None of the tracks $o,B_1,\ldots,B_{\rho-1}$ is available to another branch ray.

Let $C$ be the third arm and let $h_C$ be its length. At this cut, $R$ is the only branch ray on $A$. The ray $S$ uses the vertical arc on the track $B_\rho$. Apart from $S$, at most $h_B-\rho$ branch rays can use the tracks of $B$. At most $h_C$ branch rays can use the tracks of $C$. Therefore the cut contains at most
\begin{equation}\label{eq:track-count}
\underbrace{1}_{R}
+
\underbrace{1}_{S}
+
\underbrace{(h_B-\rho)}_{\text{unused outer part of }B}
+
\underbrace{h_C}_{\text{third arm}}
\le h_B+h_C+1\le a+b+1=s-1
\end{equation}
branch rays. This contradicts the existence of $s$ disjoint branch rays.
\end{proof}

\begin{figure}[H]
\centering
\includegraphics[width=0.92\textwidth]{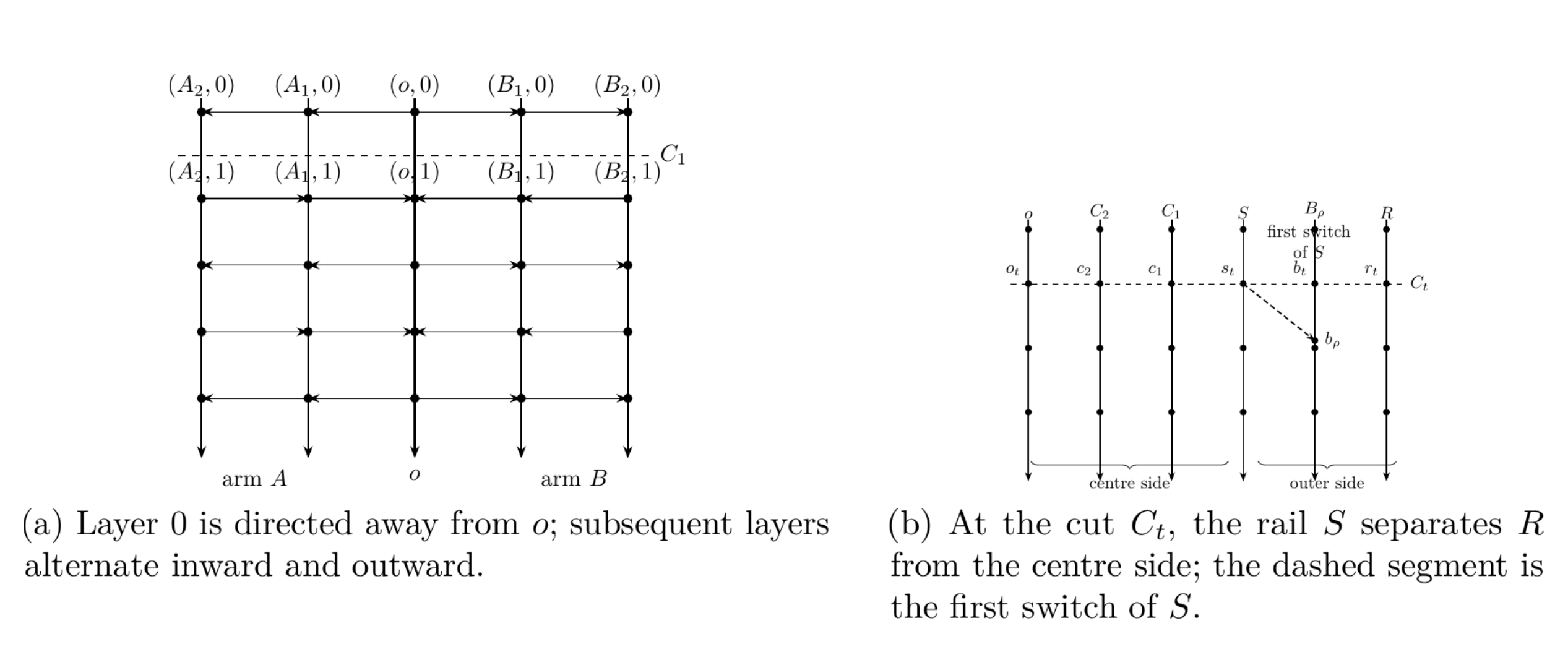}
\caption{The geometry used in Lemmas~\ref{lem:no-passing} and~\ref{lem:Stein-obstruction}. Downward arrows indicate increasing height.}
\label{fig:three-arm}
\end{figure}

\begin{lemma}\label{lem:end-degree-product}
Let $a,b,c$ be three positive integers. $\widetilde{Y}(a,b,c)$ has an end of in-degree $1+a+b+c$.
\end{lemma}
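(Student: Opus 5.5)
We prove the two inequalities separately: we exhibit $N:=1+a+b+c$ pairwise disjoint equivalent rays, and we show that no family of $N+1$ pairwise disjoint rays exists. Write $V_T=V(T(a,b,c))$, so $|V_T|=N$. For each $w\in V_T$ the vertical track $R_w=(w,0)(w,1)(w,2)\cdots$ is a directed ray, since vertical edges point upward. These $N$ rays are pairwise disjoint.

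We first show that all the $R_w$ lie in one end. By transitivity of $\sim$ (Zuther's end relation is an equivalence), it suffices to show $R_u\sim R_w$ for adjacent $u,w$ in $T(a,b,c)$. Say $u$ is closer to $o$. In every even layer $t$ the arc $(u,t)(w,t)$ is present, because arms point away from $o$. In every odd layer the arc $(w,t)(u,t)$ is present. Single arcs at distinct heights are pairwise vertex-disjoint, so we obtain infinitely many disjoint $R_u$--$R_w$ dipaths and infinitely many disjoint $R_w$--$R_u$ dipaths. Hence all $R_w$ lie in a single end $\omega$, and $\omega$ has in-degree at least $N$.

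For the upper bound, we use the first part of Lemma~\ref{lem:no-passing}: every directed ray uses exactly one arc of $C_t$ for all sufficiently large $t$. Suppose $Q_1,\dots,Q_{N+1}$ are pairwise disjoint rays in $\omega$. Each $Q_i$ has a threshold beyond which it meets every cut. Choose $t$ larger than all $N+1$ thresholds. Then the $N+1$ rays use distinct arcs of $C_t$, but $|C_t|=N$, a contradiction. The same argument shows that every end containing rays has in-degree at most $N$. So $\omega$ has in-degree exactly $1+a+b+c$. By the result of Hamann and Heuer quoted in the introduction, the maximum is attained, which is consistent with the definition of in-degree.

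The only delicate point is the precise use of Zuther's relation $\le$. It requires infinitely many pairwise vertex-disjoint $Q$--$R$ dipaths, and single rung arcs at distinct heights are trivially such paths, so this step is routine. If one also wants $\omega$ to be the unique end containing rays, note that every ray eventually lives on the tracks, meets $C_t$ cofinally, and can be linked to an adjacent track by layer arcs. This is not needed for the statement, so the argument above is sufficient.
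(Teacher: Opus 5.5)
Your proof is correct and follows the paper's argument. The $1+a+b+c$ vertical tracks give the lower bound, and the first part of Lemma~\ref{lem:no-passing} together with $|C_t|=1+a+b+c$ gives the upper bound. The only difference is that you prove equivalence just for adjacent tracks and then invoke transitivity of $\sim$. The paper instead builds dipaths directly between any two tracks, routing through $o$ across an odd layer and the following even layer when the tracks lie on different arms.
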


\begin{proof}
The $1+a+b+c$ vertical tracks are pairwise disjoint directed rays. We first show that they lie in one end. Let $R$ and $S$ be two vertical tracks on the same arm. If $R$ is farther from $o$ than $S$, an odd layer contains an $R$--$S$ dipath. If $S$ is farther from $o$, an even layer contains such a dipath. Choosing successively higher layers gives infinitely many pairwise vertex-disjoint dipaths in either direction. The same argument applies when one of the tracks is the centre track. Now suppose that $R$ and $S$ lie on different arms. In an odd layer, move from $R$ to $o$, use the vertical arc at $o$, and in the next even layer move from $o$ to $S$. Repeating the construction in successively higher layers gives infinitely many pairwise vertex-disjoint $R$--$S$ dipaths for either ordered pair of tracks. Reversing the roles of $R$ and $S$ gives the opposite direction. Hence all vertical tracks belong to one end $\omega$.

Suppose that $\omega$ contains $2+a+b+c$ pairwise disjoint directed rays. By Lemma~\ref{lem:no-passing}, every directed ray uses exactly one arc of each sufficiently high cut $C_t$, and pairwise disjoint rays use different cut arcs. Since
\[
|C_t|=1+a+b+c,
\]
no larger family of pairwise disjoint rays exists. The end therefore has in-degree $1+a+b+c$.
\end{proof}

\begin{theorem}\label{thm:lower}
For every $n\ge4$,
\[
k(n)\ge\left\lfloor\frac{3n}{2}\right\rfloor-1.
\]
\end{theorem}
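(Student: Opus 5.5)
We will exhibit, for each $n\ge4$, a digraph whose end has in-degree exactly $\lfloor 3n/2\rfloor-2$ but which contains no subdivision of $H_n$. By the definition of $k(n)$ as a least value, this gives $k(n)\ge\lfloor 3n/2\rfloor-1$. The examples will be the digraphs $\widetilde{Y}(a,b,c)$ of Construction~1. The two ingredients are Lemma~\ref{lem:Stein-obstruction}, which says that $\widetilde{Y}(a,b,c)$ contains no subdivision of $H_{a+b+2}$, and Lemma~\ref{lem:end-degree-product}, which says that $\widetilde{Y}(a,b,c)$ has an end of in-degree $1+a+b+c$.

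We need $a\ge b\ge c\ge1$ with $a+b+2=n$ and $1+a+b+c$ as large as possible. Since $c\le b$ and $a\ge b$, this means taking $b=\lfloor (n-2)/2\rfloor$, $a=\lceil (n-2)/2\rceil$ and $c=b$. Then $1+a+b+c=1+(n-2)+\lfloor (n-2)/2\rfloor$.

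\emph{Even case} $n=2m$. Take $a=b=c=m-1$. Then $c\ge1$ because $m\ge2$. The in-degree is $1+3(m-1)=3m-2=\lfloor 3n/2\rfloor-2$.

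\emph{Odd case} $n=2m+1$. Take $a=m$ and $b=c=m-1$. Then $c\ge1$ because $m\ge2$. The in-degree is $1+m+2(m-1)=3m-1=\lfloor 3n/2\rfloor-2$.

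In both cases the hypothesis $n\ge4$ is exactly what guarantees $c\ge1$.

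The conclusion needs care, because $k(n)$ is phrased with ``in-degree at least $k(n)$''. Lemma~\ref{lem:end-degree-product} shows that $\widetilde{Y}(a,b,c)$ has an end of in-degree exactly $\lfloor 3n/2\rfloor-2$. Lemma~\ref{lem:Stein-obstruction} shows that $\widetilde{Y}(a,b,c)$ has no $H_n$-subdivision at all, in any end. Hence every value $k\le\lfloor 3n/2\rfloor-2$ fails to have the defining property of $k(n)$, and the theorem follows.

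There is no real obstacle beyond the lemmas already proved. The only points to check are the parity arithmetic and the constraint $c\ge1$ together with the ordering $a\ge b\ge c$ required by Construction~1.
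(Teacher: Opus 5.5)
Your proposal is correct and is the paper's proof: the same examples $\widetilde{Y}(a,b,c)$ with $(a,b,c)=(m-1,m-1,m-1)$ for $n=2m$ and $(m,m-1,m-1)$ for $n=2m+1$. Lemma~\ref{lem:end-degree-product} supplies an end of in-degree $\lfloor 3n/2\rfloor-2$, and Lemma~\ref{lem:Stein-obstruction} excludes $H_{a+b+2}=H_n$. Your explicit checks that $n\ge4$ guarantees $c\ge1$, and that an end of in-degree exactly $\lfloor 3n/2\rfloor-2$ defeats every candidate value $k\le\lfloor 3n/2\rfloor-2$, are small clarifications that the paper leaves implicit.
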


\begin{proof}
For $n=2m$, take $(a,b,c)=(m-1,m-1,m-1)$. By Lemma~\ref{lem:end-degree-product}, the resulting digraph
has an end of in-degree $1+a+b+c=3m-2$. Lemma~\ref{lem:Stein-obstruction} excludes $H_{a+b+2}=H_{2m}$. Thus
$k(2m)\ge3m-1$.

For $n=2m+1$, take $(a,b,c)=(m,m-1,m-1)$. The end now has in-degree $1+a+b+c=3m-1$, while Lemma~\ref{lem:Stein-obstruction} excludes $H_{a+b+2}=H_{2m+1}$. Hence $k(2m+1)\ge3m$.
\end{proof}

\begin{proof}[Proof of Theorem~\ref{thm:main}]
Proposition~\ref{prop:small12} gives the values for widths $1$ and $2$. Equation~\eqref{eq:k3}, obtained from Lemma~\ref{lem:Gamma3} and Theorem~\ref{thm:upper}, gives $k(3)=4$. For every $n\ge4$, Theorems~\ref{thm:upper} and~\ref{thm:lower} give matching upper and lower bounds.
\end{proof}

\begin{remark}
The value $k(3)=4$ is the first place where the vacancy mechanism enters the argument. The recurrence in Theorem~\ref{thm:tree-vacancy} adds one hole and two labels whenever three vertices are added, giving $2m$ labels on $3m-1$ carriers and $2m+1$ labels on $3m$ carriers. Since $h(3)=0$, three carriers provide no vacancy, and the cyclic example $\Gamma_3$ shows that they need not produce the two-way contacts required by $H_3$. For four carriers, $h(4)=1$, so the directed scheduling argument can be applied.
\end{remark}

\section{The forced width as a function of the end-degree}
Theorem~\ref{thm:main} can also be viewed from the opposite direction: for a prescribed end in-degree, one may ask for the largest grid width that is always forced. This leads to the following parameter.

For a positive integer $d$, let $W(d)$ denote the largest integer $n$ such that every digraph $D$ with an end $\omega$ satisfying $d^-(\omega)=d$ contains a subdivision of $H_n$ whose branch rays belong to $\omega$.

\begin{corollary}\label{width correspondence}
For every \(d\ge 4\),
\[
W(d)=\left\lfloor\frac{2d}{3}\right\rfloor+1.
\]
In addition, $W(1)=1, W(2)=2, W(3)=2.$
\end{corollary}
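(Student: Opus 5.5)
To compute $W(d)$ exactly, I will derive matching lower and upper bounds from the results already established, using the fact that both sides are stated with branch rays in $\omega$.

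\emph{Lower bound.} Let $d\ge4$ and $n=\lfloor 2d/3\rfloor+1$. I will check that $k(n)\le d$, i.e.\ $\lfloor 3n/2\rfloor-1\le d$ when $n\ge4$, and then apply Theorem~\ref{thm:upper}. I split by $d \bmod 3$:
\begin{itemize}
\item If $d=3m$, then $n=2m+1$ and $\lfloor 3n/2\rfloor-1=3m=d$.
\item If $d=3m+1$, then $n=2m+1$ and the value is $3m<d$.
\item If $d=3m+2$, then $n=2m+2$ and the value is $3m+2=d$.
\end{itemize}
For $d\ge4$ we have $n\ge3$. The single exceptional case is $d=4$, which gives $n=3$; there Theorem~\ref{thm:upper} gives $k(3)\le4$ directly. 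Theorem~\ref{thm:upper} also states that the branch rays can be taken in $\omega$, which is exactly what the definition of $W$ requires. Here I use that an end of in-degree exactly $d$ certainly has in-degree at least $k(n)$.

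\emph{Upper bound.} I need, for each $d\ge4$, a digraph with an end of in-degree exactly $d$ containing no subdivision of $H_{n+1}$ at all. This is stronger than needed, since it also rules out subdivisions whose rays lie in $\omega$. I use Lemma~\ref{lem:end-degree-product} together with Lemma~\ref{lem:Stein-obstruction}. We need $a\ge b\ge c\ge1$ with $1+a+b+c=d$ and $a+b+2=n+1$, i.e.\ $a+b=\lfloor 2d/3\rfloor$ and $c=d-1-\lfloor2d/3\rfloor$. I take $a,b$ as balanced as possible:
\begin{itemize}
\item If $d=3m$: $a+b=2m$ and $c=m-1$, so take $a=b=m$.
\item If $d=3m+1$: $a+b=2m$ and $c=m$, so take $a=b=c=m$.
\item If $d=3m+2$: $a+b=2m+1$ and $c=m$, so take $(m+1,m,m)$.
\end{itemize}
The constraints $b\ge c\ge1$ hold whenever $m\ge1$, except in the case $d=3m$, which needs $m\ge2$. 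The leftover case $d=3$ is not in range, so $d\ge4$ is fine.

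Lemma~\ref{lem:end-degree-product} only says the product has \emph{an} end of in-degree $1+a+b+c$. I will note that this end contains all vertical tracks, so it is the end $\omega$ we use. Since the obstruction excludes every subdivision of $H_{n+1}$, we get $W(d)\le n$.

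\emph{Small cases.} By Proposition~\ref{prop:small12}, $W(1)\ge1$ and $W(2)\ge2$; the $H_2$ construction there already keeps the rays in $\omega$, since they are tails of $R$ and $S$. A single ray shows $W(1)\le1$, because $H_2$ needs two disjoint rays, which an end of in-degree one cannot supply. For $d=2$ and $d=3$, widths $\ge3$ need three disjoint rays in the end, and $\Gamma_3$ from Lemma~\ref{lem:Gamma3} gives $W(3)\le2$. For $W(2)\le2$, any end of in-degree $2$ works, e.g.\ $\widetilde{Y}$-style or simply two rays joined in both directions. For $W(3)\ge2$, I again use the construction of Proposition~\ref{prop:small12}.

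\emph{Main obstacle.} I expect the hardest step to be the arithmetic and edge cases in the upper bound: checking the parameter constraints $a\ge b\ge c\ge1$ in every residue class, and confirming that the relevant end in the product is the one with in-degree exactly $d$. The rest is bookkeeping against Theorem~\ref{thm:main}.
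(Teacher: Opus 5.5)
Your proposal is correct and follows the paper's proof essentially step for step. It uses the same residue-class check that $k(n)\le d$, with $d=4$, $n=3$ handled by $k(3)\le 4$, fed into Theorem~\ref{thm:upper}; the same parameter choices $(m,m,m-1)$, $(m,m,m)$, $(m+1,m,m)$ in Lemmas~\ref{lem:end-degree-product} and~\ref{lem:Stein-obstruction}; and the same treatment of $d\le 3$ via Proposition~\ref{prop:small12}, counting disjoint rays, and $\Gamma_3$. One point both you and the paper leave implicit: excluding $H_{n+1}$ also excludes every wider grid $H_{n'}$, because $H_{n+1}$ is the subgraph of $H_{n'}$ spanned by its first $n+1$ rails, so $n$ really is the largest forced width.
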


\begin{proof}
An end of in-degree one contains a ray but cannot contain two disjoint branch rays, so $W(1)=1$. If $d=2$, Proposition~\ref{prop:small12} guarantees an $H_2$-subdivision, while an $H_3$-subdivision whose branch rays belong to the same end would contain three pairwise disjoint rays in that end. Thus $W(2)=2$. Every end of in-degree three contains an $H_2$-subdivision, whereas the digraph \(\Gamma_3\) from Lemma~\ref{lem:Gamma3} has an end of in-degree three but contains no $H_3$-subdivision. Hence $W(3)=2$.

Now let \(d\ge 4\), and set
\[
N=\left\lfloor\frac{2d}{3}\right\rfloor+1.
\]

We first prove that \(W(d)\ge N\).
If $d=3m$, then $N=2m+1$, and Theorem~\ref{thm:main} gives $k(N)=k(2m+1)=3m=d.$
If $d=3m+1$, then $N=2m+1$. For $m=1$, we have $N=3$ and $k(N)=k(3)=4=d$; for $m\ge2$, $k(N)=k(2m+1)=3m<d.$
Finally, if $d=3m+2$, then $N=2m+2$, and $k(N)=k(2m+2)=3m+2=d.$ Thus $k(N)\le d$ in every case. Since the given end has in-degree $d\ge k(N)$, Theorem~\ref{thm:upper} yields an $H_N$-subdivision whose branch rays belong to that end. Hence $W(d)\ge N$.

For the reverse inequality, choose
\[
(a,b,c)=
\begin{cases}
(m,m,m-1), & d=3m,\\
(m,m,m),   & d=3m+1,\\
(m+1,m,m), & d=3m+2.
\end{cases}
\]
Since $d\geq 4$, these parameters satisfy
\[
a\ge b\ge c\ge1.
\]
A direct calculation gives
\[
1+a+b+c=d
\qquad\text{and}\qquad
a+b+2=N+1.
\]
By Lemma~\ref{lem:end-degree-product}, the digraph \(\widetilde{Y}(a,b,c)\) has an end of in-degree exactly $d$. By Lemma~\ref{lem:Stein-obstruction}, it contains no subdivision of $H_{a+b+2}=H_{N+1}.$
Hence an end of in-degree $d$ does not necessarily force an $H_{N+1}$-subdivision, and therefore
\[
W(d)\le N.
\]

Combining the two inequalities yields
\[
W(d)=N=\left\lfloor\frac{2d}{3}\right\rfloor+1
\]
for every \(d\ge4\).
\end{proof}

\section*{Declaration on the Use of AI}
AI tools were used at an early stage of this work to assist with checking proof ideas and improving the presentation of the manuscript. All mathematical results and proofs were developed and verified by the authors, who take full responsibility for the content of this work.

\end{document}